\newcommand{\mm}{\mathbb{M}^n_k(c)}
\newcommand{\VV}{\ep\,I\times_a \mathbb{E}^{n+1}}
\newcommand{\mmm}{\ep\,I\times_a\mm}
\newcommand{\R}{\mathbb{R}}
\newcommand{\nab}{\bar\nabla}
\newcommand{\nat}{\widetilde{\nabla}}
\newcommand{\na}{\nabla}
\newcommand{\ep}{\varepsilon}
\newcommand{\ti}[1]{\tilde{#1}}
\newcommand{\dt}{\partial_t}
\newcommand{\grad}{\mathrm{grad}}
\newcommand{\X}{\mathbf{X}}
\newtheorem{lemma}{Lemma}
\newtheorem{proposition}{Proposition}
\newtheorem{theorem}{Theorem}
\newtheorem{corollary}{Corollary}
\newtheorem{definition}{Definition}
\newtheorem{examp}{Example}
\newenvironment{example}[1]{\begin{examp} #1 \newline \rm }{\end{examp}}
\newcommand{\produ}[1]{\langle #1\rangle}
\newcommand{\E}{\overline{E}}
\newcommand{\PP}{\bar{P}^{n+1}}
\newcommand{\DD}{\mathbf{D}}
\renewcommand{\S}{\mathbf{S}}
\title{A Fundamental Theorem for Hypersurfaces in Semi-Riemannian Warped
Products}
\author{Marie-Am\'elie Lawn and Miguel Ortega}
\date{\today}
\begin{document}

\maketitle

\begin{abstract}
We give necessary and sufficient conditions for a semi-Riemannian manifold of arbitrary signature to be locally isometrically immersed into a warped product $\pm I\times_a\mathbb{M}^n(c)$, where $I\subset\mathbb{R}$ and $\mathbb{M}^n(c)$ is a semi-Riemannian space of constant nonzero sectional curvature. Then, we describe a way to use the structure equations of such immersions to construct foliations of marginally trapped surfaces in a four-dimensional Lorentzian spacetimes. We point out that, sometimes, Gau\ss\, and Codazzi equations are not sufficient to ensure the existence of a local isometric immersion of a semi-Riemannian manifold as a hypersurface of another manifold. We finally give two low-dimensional examples to illustrate our results.
\end{abstract}

\section{Introduction}

One of the fundamental problems in submanifold theory deals with the existence of isometric immersions from one manifold into another. The Gau\ss, Ricci and Codazzi equations are very well-known as \textit{the structure equations}, meaning that any submanifold of any semi-Riemannian manifold must satisfy them. A classical result states that, conversely, they are necessary and sufficient conditions for a Riemannian $n$-manifold to admit a (local) immersion in the Euclidean $(n+1)$-space. In addition,  E. Cartan developed the so-called \textit{moving frames} technique, obtaining a necessary and sufficient condition to construct a map from a (differential) manifold $M$ into a Lie group. If a Lie group $\mathbf{G}$ is a group of diffeomorphisms of a manifold $P$, Cartan's technique then may provide a map from $M$ to $P$ with nice properties. Sometimes, the map from $M$ to $\mathbf{G}$ can exist thanks to Gau\ss,  Codazzi and Ricci equations, like for instance in \cite{KS}.

Another point of view is the celebrated Nash Theorem, which states that any Riemann manifold can be embedded in the Euclidean space, but at the price of a high codimension. Following that line, O. M\"uller and M. S\'anchez obtained a characterization of the Lorenztian manifolds which can be embedded in a high dimensional Minkowski space (see \cite{MS}.)

On the other hand, B.~Daniel obtained in \cite{D} a \textit{fundamental theorem} for hypersurfaces in the Riemannian products $\mathbb{S}^n\times\R$ and $\mathbb{H}^n\times\R$,  looking for tools to work with minimal surfaces in such manifolds when $n=2$. J.~Roth generalized B. Daniel's theorem to spacelike hypersurfaces in some Lorentzian products (see \cite{JR}). In their works, they needed some extra tools such as a tangent vector field $T$ to the submanifold and some functions, in order to obtain the local metric immersions into the desired ambient spaces. Note that the Ricci equation provides no information for hypersurfaces. 

Our main aim is to obtain a \textit{fundamental theorem} for non-degenerate hypersurfaces in a semi-Riemannian warped product, namely  $\mmm$, where $\ep=\pm 1$, $a:I\subset\R\rightarrow\R^+$ is the scale factor and $\mathbb{M}_k^n(c)$ is the semi-Riemannian space form of index $k$ and constant curvature $c=\pm 1$. For a hypersurface $M$ in $\mmm$, the vector field $\dt$ ($t\in I$) decomposes in its tangent and normal parts, i.~e., $\dt = T+\ep_{n+1}T_{n+1}e_{n+1}$ where $e_{n+1}$ is a (local) normal unit vector field, $\ep_{n+1}=\pm 1$ shows its causal character and $T_{n+1}$ is the corresponding coordinate.  In addition to the shape operator $A$, on Gau\ss\, and Codazzi equations there appear the vector field $T$, its dual 1-form $\eta$, some constants as well as some functions like $T_{n+1}$.  However, the covariant derivative of $T$ must satisfy a specific formula, which cannot be obtained from Gau\ss\, and Codazzi equations by the authors. Based  on these  necessary conditions, we state in Definition \ref{structureconditions} all needed tools on an abstract semi-Riemannian manifold $M$, for the existence of a (local) metric immersion $\chi: \mathcal{U}\subset M \rightarrow \mmm$ (see Theorem \ref{main}.) Later, we apply this result to non-degenerate hypersurfaces of a Friedman-Lem\^etre-Rober\-t\-son-Walker 4-spacetimes (RW 4-spacetimes.) In Corollary \ref{RW}, we show  sufficient conditions for such hypersurfaces to exist. 

We would like to point out that our computations, as well as B. Daniel and J. Roth's results, show that  Gau\ss\, and Codazzi equations are not sufficient to ensure the existence of a local isometric immersion of a given Riemannian manifold endowed with a second fundamental form in a spacetime as a spacelike hypersurface. 

Next, if we admit in a very wide sense that a \textit{horizon} in a 4-spacetime is a 3-dimensional hypersurface which is foliated by marginally trapped surfaces (i.~e., surfaces whose mean curvature vector is timelike), then we describe a condition to obtain non-degenerate horizons in RW 4-spacetimes in our framework (see Corollary \ref{horizon}).  

We end the paper with two low-dimensional examples to illustrate the theoretical results. The first one  describes a surface in a RW toy model $\mathbb{S}^2\rightarrow -I\times_a\mathbb{S}^2$,  a (simple) graph over a rest space $\{t_0\}\times\mathbb{S}^2$. The second example is a  helicoidal surface in $ -I\times_a\mathbb{H}^2$.

\section{Preliminaries}
Let $(P,g_P)$ be a semi-Riemannian manifold of dimension $\dim P=m$. We consider
a smooth function $a:I\subset\R\rightarrow \R^{+}$, a (sign) constant $\ep=\pm
1$ and the warped product
\[ \bar{P}^{m+1} \ = \ \varepsilon I\times_a P, \quad \langle,\rangle =
\ep\mathrm{d}t^2+a^2(t)g_P.
\]
Clearly, the unit vector field $\frac{\partial}{\partial t}= \partial_t$ will play a crucial role on the manifold $\PP$. 
We will use the following convention for the curvature operator $\mathcal{R}$ of a connection $\mathcal{D}$:
\[\mathcal{R}(X,Y)Z= \mathcal{D}_X\mathcal{D}_YZ-\mathcal{D}_Y\mathcal{D}_XZ-\mathcal{D}_{[X,Y]}Z.
\] 
Let $\bar{R}_P$ and $R_P$ be the curvature operator of $\PP$ and $P$, respectively. Let $\mathbf{D}$ be the
Levi-Civita connection of $\PP$.  We recall the following formulae from \cite{ON}.  
\begin{lemma} \label{lemma-on-1} On the semi-Riemannian manifold $\PP$, the
following statements hold, for any $V,W$ lifts of vector fields tangent to $P$:
\begin{enumerate}
\item \label{uno} $\mathbf{D}_{\partial_t}\partial_t=0$,
$\mathbf{D}_V\partial_t=\frac{a'}{a}\,V$,
\item $\mathrm{grad}(a)=\ep a'\partial_t$.
\item \label{nablas} $\mathbf{D}_VW = \nabla_V^PW-\frac{\ep a'}{a}\produ{V,W}\dt.$
\item \label{cuatro} $\bar{R}_P(V,\dt)\dt = -\frac{a''}{a}V$,
\label{cinco} $\bar{R}_P(\partial_t,V)W=-\ep \frac{a''}{a}\langle V,W\rangle\partial_t$, 
\label{seis} $\bar{R}_P(V,W)\dt=0$.
\end{enumerate}
\end{lemma}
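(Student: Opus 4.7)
The plan is to verify all four items by a direct calculation from the Koszul formula for the Levi-Civita connection $\mathbf{D}$ of the warped metric $\varepsilon\,\mathrm{d}t^2+a^2 g_P$. I would first record four structural facts about lifts $V,W$ of vector fields from $P$: (i) $[\partial_t,V]=0$; (ii) the bracket $[V,W]$ is itself a horizontal lift; (iii) $V(a)=0$ since $a$ depends only on $t$; and (iv) $\partial_t\langle V,W\rangle=\partial_t(a^2 g_P(V,W))=2(a'/a)\langle V,W\rangle$, because the $g_P$-part of the inner product is $t$-independent.

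For item (1) I plug $X=Y=\partial_t$ into the Koszul formula
\[
2\langle \mathbf{D}_X Y,Z\rangle=X\langle Y,Z\rangle+Y\langle X,Z\rangle-Z\langle X,Y\rangle+\langle[X,Y],Z\rangle-\langle[X,Z],Y\rangle-\langle[Y,Z],X\rangle
\]
and test against $Z=\partial_t$ and against a horizontal lift $Z=W$: every summand vanishes using (i)--(iv) together with the fact that $\langle \partial_t,\partial_t\rangle=\varepsilon$ is constant, giving $\mathbf{D}_{\partial_t}\partial_t=0$. With $X=V$, $Y=\partial_t$, the Koszul expression tested against $\partial_t$ is zero, while tested against $W$ it collapses to $2(a'/a)\langle V,W\rangle$, yielding $\mathbf{D}_V\partial_t=(a'/a)V$ by non-degeneracy of $g_P$. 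Item (2) then follows at once: $\langle \mathrm{grad}(a),X\rangle=X(a)$ is zero for horizontal $X$ and equals $a'$ for $X=\partial_t$, so $\mathrm{grad}(a)=\varepsilon a'\partial_t$.

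For item (3), I apply Koszul again to $\mathbf{D}_V W$, testing separately against a third horizontal lift $U$ and against $\partial_t$. The first test reproduces exactly the Koszul formula for $(P,g_P)$ scaled by $a^2$, which identifies the horizontal part of $\mathbf{D}_V W$ with $\nabla^P_V W$; the test against $\partial_t$ leaves only the cross term $-\partial_t\langle V,W\rangle/2=-(a'/a)\langle V,W\rangle$, which after dividing by $\langle\partial_t,\partial_t\rangle=\varepsilon$ produces the $-(\varepsilon a'/a)\langle V,W\rangle\partial_t$ component.

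Finally, for item (4) I feed (1) and (3) into the curvature operator, exploiting $[V,\partial_t]=0$ throughout. In $\bar{R}_P(V,\partial_t)\partial_t$ only $-\mathbf{D}_{\partial_t}\mathbf{D}_V\partial_t$ survives, and since $\mathbf{D}_{\partial_t}V=\mathbf{D}_V\partial_t+[\partial_t,V]=(a'/a)V$, the identity $\partial_t(a'/a)=a''/a-(a'/a)^2$ collapses the result to $-(a''/a)V$. The formula for $\bar{R}_P(\partial_t,V)W$ comes out in the same way, the $(a'/a)^2$ contributions cancelling between $\mathbf{D}_{\partial_t}\mathbf{D}_V W$ and $\mathbf{D}_V\mathbf{D}_{\partial_t}W$ and leaving $-\varepsilon(a''/a)\langle V,W\rangle\partial_t$. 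Lastly, $\bar{R}_P(V,W)\partial_t=0$ is immediate: since $V(a'/a)=W(a'/a)=0$, we get $\bar{R}_P(V,W)\partial_t=(a'/a)(\mathbf{D}_V W-\mathbf{D}_W V-[V,W])=0$ by torsion-freeness. No conceptual obstacle is expected; the only delicate points are the sign bookkeeping for $\varepsilon$ (which enters only through $\langle\partial_t,\partial_t\rangle^{-1}$) and keeping straight the two connections $\nabla^P$ and $\mathbf{D}$ when they act on lifts.
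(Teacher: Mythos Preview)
Your proposal is correct and follows essentially the same approach as the paper. The only difference is cosmetic: the paper simply cites O'Neill for items (1)--(3) and gives an explicit computation only for item (4), whereas you redo items (1)--(3) from the Koszul formula; for item (4) your direct unfolding of the curvature operator matches the paper's computation, though the paper shortens the calculation of $\bar{R}_P(\partial_t,V)W$ by invoking the identity $-\frac{\langle V,W\rangle}{a}\mathbf{D}_{\partial_t}(\mathrm{grad}\,a)$ from O'Neill rather than expanding both second covariant derivatives.
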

\begin{proof} Note that the definition of the curvature operator on
\cite{ON} has the opposite sign than the usual one. We show a proof of item
(\ref{cuatro}). By recalling $\DD_{\dt}\dt =0$ and $[V,\dt]=0$, we have
$\bar{R}_P(V,\dt)\dt=\DD_{V}\DD_{\dt}\dt-\DD_{\dt}\DD_{\dt}V-\DD_{[V,\dt]}\dt
=-\DD_{\dt}\DD_{\dt}V =-\DD_{\dt}\left(\frac{a'}{a}V\right)
=-\frac{a''a -(a')^2}{a^2}V-\frac{a'}{a}\DD_{\dt}V =
-\frac{a''}{a}V.$ Next, we show 
 $\bar{R}_P(\dt,V)W = 
 -\frac{\produ{V,W}}{a}\DD_{\dt}(\mathrm{grad} a)= -\frac{\produ{V, W}}{a}\DD_{\dt} (\ep a'\dt)
= -\ep \frac{\produ{V, W}a''}{a}\dt.$ Finally, $\bar{R}_P(V,W)\dt=0$ is a direct consequence of item \ref{uno}.
\end{proof}

Now, let $\mathcal{M}$ be a non-degenerate hypersurface of $\bar{P}^{m+1}$, with $\nabla^{\mathcal{M}}$ its Levi-Civita connection, $\sigma$ the second fundamental form and $R_{\mathcal{M}}$ the curvature operator of $\mathcal{M}$, respectively. Given a (local) unit  normal vector field $\nu$ of $\mathcal{M}$ in $\bar{P}^{m+1}$, with $\delta=\produ{\nu,\nu}=\pm 1$, let $\mathcal{A}$ be the shape operator associated with $\nu$. The Gau\ss\, and Weingarten's formulae are
\[ \mathbf{D}_X Y = \nabla^{\mathcal{M}}_XY +\sigma(X,Y), \quad 
\quad \mathbf{D}_X\nu = -\mathcal{A}X,
\] 
for any $X,Y\in T\mathcal{M}.$ The second fundamental form can be written as 
\[ \sigma (X,Y) \ = \ \delta \produ{\mathcal{A}X,Y}\nu, \ \mbox{for any } X,Y\in T\mathcal{M}.
\]
Recall that the \textit{mean curvature vector} of $\mathcal{M}$ is defined by
\[ \vec{H}=\frac{1}{\dim(\mathcal{M})} \mathrm{Tr}(\sigma).
\]

Next, the Codazzi equation of $\mathcal{M}$ takes the general form
$(\bar{R}_P(X,Y)Z)^{\perp} = (\mathbf{D}_X\sigma)(Y,Z)-(\mathbf{D}_Y\sigma)(X,Z)$, 
for any $X,Y,Z$ tangent to $\mathcal{M}$, which is equivalent to
\begin{equation}
\label{eq-codazzigeneral}
\bar{R}_P(X,Y,Z,\nu) = 
\produ{(\mathbf{D}_X\mathcal{A}) Y - (\mathbf{D}_Y\mathcal{A}) X,Z},
\end{equation}
for any $X,Y,Z\in T\mathcal{M}$. Further, the general Gau\ss~  equation is given by
\begin{align}\label{eq-Gauss}
\bar{R}_P(X,Y,Z,W)&=R_{\mathcal{M}}(X,Y,Z,W)
-\langle\sigma(Y,Z),\sigma(X,W)\rangle+\langle\sigma(Y,W),\sigma(X,Z)\rangle \\
&=R_{\mathcal{M}}(X,Y,Z,W)
-\delta \produ{\mathcal{A}Y,Z}\produ{\mathcal{A}X,W}+\delta \produ{\mathcal{A}Y,W}\produ{\mathcal{A}X,Z}, \nonumber
\end{align}
with $X,Y,Z,W$ tangent to $\mathcal{M}$.\\

We consider now the special case where the manifold $P=\mathbb{E}^{n+1}=\R^{n+1}_k$, i.~e., the standard Euclidean semi-Riemannian space of dimension $n+1\geq 3$ and index $k$. Following the previous notation, we construct $\widetilde{P}^{n+2}=\VV.$ Let $\widetilde{R}$ be the curvature tensor of $\widetilde{P}^{n+2}$. We have
\begin{proposition}\label{tildeR} Let $X,Y,Z,W\in\Gamma(T\widetilde{P}^{n+2})$. \begin{gather*} 
\widetilde{R}(X,Y,Z,W)=
\ep\frac{(a')^2}{a^2} \Big( \langle X,Z\rangle\langle Y,W\rangle -\langle Y,Z\rangle\langle X,W\rangle \Big)  
\\ 
+\left(\frac{a''}{a} -\frac{(a')^2}{a^2}\right)\Big(
\langle X,Z\rangle\langle Y,\dt \rangle\langle W,\dt \rangle
-\langle Y,Z\rangle\langle X,\dt \rangle\langle W,\dt \rangle
\\
- \langle X,W\rangle\langle Y,\dt \rangle\langle Z,\dt \rangle
+ \langle Y,W\rangle\langle X,\dt \rangle\langle Z,\dt \rangle
\Big).
\end{gather*}
\end{proposition}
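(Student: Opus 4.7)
The plan is to decompose each of $X,Y,Z,W$ into its $\partial_t$-component and its component tangent (as a lift) to the fiber $\mathbb{E}^{n+1}$, writing
\[
X \ = \ X^* + \varepsilon\,\langle X,\partial_t\rangle\,\partial_t, \qquad \langle X^*,\partial_t\rangle = 0,
\]
and likewise for $Y,Z,W$. Multilinearity of $\widetilde R$ then produces sixteen terms, each either controlled by Lemma \ref{lemma-on-1} (whenever some slot is $\partial_t$) or by the standard O'Neill warped-product identity on the fiber slice; the fact that the fiber is flat is what makes the final formula so compact.

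First I would handle the purely fiber-tangent term $\widetilde R(X^*,Y^*,Z^*,W^*)$. Starting from item (\ref{nablas}) of Lemma \ref{lemma-on-1} and iterating the covariant derivative (equivalently, using O'Neill's Proposition~7.42, with the sign flip noted in the proof of that lemma), one obtains
\[
\widetilde R(V,W)U \ = \ \varepsilon\,\frac{(a')^2}{a^2}\bigl(\langle V,U\rangle W - \langle W,U\rangle V\bigr)
\]
for lifts $V,W,U$ of vector fields on $\mathbb{E}^{n+1}$, the factor $\varepsilon(a')^2$ coming from $\langle\mathrm{grad}(a),\mathrm{grad}(a)\rangle$; pairing with $W^*$ yields the corresponding four-tensor on the fiber.

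For the mixed terms I would exploit the symmetries $\widetilde R(X,Y,Z,W)=-\widetilde R(Y,X,Z,W)=-\widetilde R(X,Y,W,Z)=\widetilde R(Z,W,X,Y)$ together with the three curvature identities collected in the fourth entry of Lemma \ref{lemma-on-1}. Terms in which $\partial_t$ appears in only one slot vanish because $\widetilde R(V,W)\partial_t = 0$ combined with the pair-swap symmetry; those with $\partial_t$ in both members of an antisymmetric pair vanish by antisymmetry; and a short check eliminates the configurations with three $\partial_t$'s. Only four terms survive, each of the form $(a''/a)\,\langle V^*,W^*\rangle\,\langle\cdot,\partial_t\rangle\,\langle\cdot,\partial_t\rangle$.

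The last step is to substitute $\langle V^*,W^*\rangle = \langle V,W\rangle - \varepsilon\langle V,\partial_t\rangle\langle W,\partial_t\rangle$ in all surviving terms and collect. The pure fiber contribution then splits into the piece $\varepsilon(a')^2/a^2\bigl(\langle X,Z\rangle\langle Y,W\rangle - \langle Y,Z\rangle\langle X,W\rangle\bigr)$ plus four cross-terms with coefficient $-(a')^2/a^2$, which recombine with the four surviving mixed terms (of coefficient $a''/a$) into precisely the factor $a''/a - (a')^2/a^2$ of the statement. I expect the main obstacle to be careful bookkeeping: tracking the signs $\varepsilon$, the extra factor $\langle\partial_t,\partial_t\rangle = \varepsilon$ arising each time $\partial_t$ is contracted with itself, and the O'Neill-versus-paper curvature sign flip, while ensuring that none of the sixteen multilinear pieces is overcounted or forgotten.
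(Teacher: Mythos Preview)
Your proposal is correct and follows essentially the same approach as the paper's proof: decompose each vector as $X=\tilde X+\varepsilon\langle X,\partial_t\rangle\partial_t$, expand $\widetilde R$ by multilinearity, use Lemma~\ref{lemma-on-1} (and the flatness of the fiber via O'Neill's warped-product formula) to evaluate or kill each piece, and then rewrite the remaining $\langle\tilde X,\tilde Y\rangle$ factors in terms of the full inner products to collect the coefficients. The paper carries out exactly this bookkeeping explicitly, so your plan matches it step for step.
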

\begin{proof} Let $X=\tilde{X}+x\dt=\tilde{X}+\ep\produ{X,\dt}\dt$, where $\tilde{X}$ is a vector field tangent to $\widetilde{P}^{n+2}$. We will use similar notations for other vector fields.  In particular, we see that $\produ{\tilde X,\tilde Y}=\produ{X,Y}-\ep\produ{X,\dt}\produ{Y,\dt}$. By using the symmetry properties of the curvature tensor, we get
\begin{align*}
\tilde{R}(X,Y,Z,W) &=\tilde{R}(\tilde{X},\tilde{Y},\tilde{Z},\tilde{W})+\tilde{R}(\tilde{X},\tilde{Y},
\tilde{Z},w\partial_t)
+\tilde{R}(\tilde{X},\tilde{Y},z\partial_t,\tilde{W})\\
&+\tilde{R}(\tilde{X},y\partial_t,\tilde{Z},\tilde{W})
+\tilde{R}(\tilde{X},y\partial_t,\tilde{Z},w\partial_t)
+\tilde{R}(\tilde{X},y\partial_t,z\partial_t,\tilde{W})&  \\
&+\tilde{R}(x\partial_t,\tilde{Y},\tilde{Z},\tilde{W})
+\tilde{R}(x\partial_t,\tilde{Y},\tilde{Z},w\partial_t)+\tilde{R}(x\partial_t,\tilde
{Y},z\partial_t,\tilde{W}) &
\end{align*}
By Lemma \ref{lemma-on-1}, we obtain directly
\begin{align*}
\tilde{R}(\tilde{X},\tilde{Y},z\partial_t,\tilde{W})&=0,\quad 
\tilde{R}(\tilde{X},\tilde{Y},\tilde{Z},w\partial_t)=-\tilde{R}(\tilde{X},\tilde{Y}
,w\partial_t,\tilde{Z})=0,&\\
\tilde{R}(x\partial_t,\tilde{Y},\tilde{Z},\tilde{W})&=\tilde{R}(\tilde{Z},\tilde{W},
x\partial_t,\tilde{Y})=0,\quad
\tilde{R}(\tilde{X},y\partial_t,\tilde{Z},\tilde{W})=-\tilde{R}(\tilde{Z},\tilde{W}
,y\partial_t,\tilde{X})=0.
\end{align*}
Since the curvature tensor of $\mathbb{E}^{n+1}$ vanishes, by \cite[p. 210]{ON}, we get
\[
\tilde{R}(\tilde{X},\tilde{Y},\tilde{Z},\tilde{W})=-\ep \frac{(a')^2}{a^2}
(\langle\tilde{Y},\tilde{Z}\rangle\langle\tilde{X},\tilde{W}
\rangle-\langle\tilde{X},\tilde{Z}\rangle\langle\tilde{Y},\tilde{W}\rangle).
\]
Moreover, with Lemma \ref{lemma-T}.2, using
again Lemma \ref{lemma-on-1}.4 and as $\langle\partial_t,\partial_t\rangle=\ep$, 
\[
\tilde{R}(\tilde{X},y\partial_t,\tilde{Z},w\partial_t)=
-\tilde{R}(y\partial_t,\tilde{X},\tilde{Z},w\partial_t)
=\ep\frac{a''}{a}\langle\tilde{X},\tilde{Z}\rangle\langle y\partial_t,
w\partial_t\rangle
=\frac{a''}{a}\langle Y,\dt \rangle \langle
W,\dt \rangle\langle\tilde{X},\tilde{Z}\rangle.
\]
By similar computations, we obtain 
\begin{align}\label{equation_curvature_tildeP}
\tilde{R}(X,Y,Z,W)=&-\ep\frac{(a')^2}{a^2} \Big(\langle\tilde{Y},\tilde{Z}\rangle\langle\tilde{X},\tilde{W}
\rangle-\langle\tilde{X},\tilde{Z}\rangle\langle\tilde{Y},\tilde{W} \rangle\Big) \nonumber\\
&+\frac{a''}{a}\Big(
\langle Y,\dt \rangle \langle W,\dt \rangle\langle\tilde{X},\tilde{Z}\rangle
-\langle Y,\dt \rangle\langle Z,\dt \rangle\langle\tilde{X},\tilde{W}\rangle  \\
&-\langle X,\dt \rangle\langle W,\dt \rangle\langle\tilde{Y},\tilde{Z}\rangle\nonumber
+\langle X,\dt \rangle\langle Z,\dt \rangle\langle\tilde{Y},\tilde{W}\rangle
\Big).
\end{align}
Now, straightforward computations yield
\begin{align*}
&\langle\tilde{Y},\tilde{Z}\rangle\langle\tilde{X},\tilde{W}
\rangle-\langle\tilde
{X},\tilde{Z}\rangle\langle\tilde{Y},\tilde{W}\rangle 
+\langle Y,\dt \rangle\langle Z,\dt \rangle\langle X,\dt \rangle\langle W,\dt \rangle
-\langle X,Z\rangle\langle Y,W\rangle
\\
&\quad +\ep\langle X,Z\rangle\langle Y,\dt \rangle\langle W,\dt \rangle
+\ep\langle X,\dt \rangle\langle Z,\dt \rangle\langle Y,W\rangle
-\langle X,\dt \rangle\langle Z,\dt \rangle\langle Y,\dt \rangle\langle W,\dt \rangle
\\
&=
\langle Y,Z\rangle\langle X,W\rangle
-\ep\langle Y,Z\rangle\langle X,\dt \rangle\langle W,\dt \rangle
-\ep\langle Y,\dt \rangle\langle Z,\dt \rangle\langle X,W\rangle
-\langle X,Z\rangle\langle Y,W\rangle
\\
&\quad
+\ep\langle X,Z\rangle\langle Y,\dt \rangle\langle W,\dt \rangle
+\ep\langle X,\dt \rangle\langle Z,\dt \rangle\langle Y,W\rangle,
\end{align*}
and
\begin{align*}
&
\langle Y,\dt \rangle \langle W,\dt \rangle\langle\tilde{X},\tilde{Z}\rangle
-\langle Y,\dt \rangle\langle Z,\dt \rangle\langle\tilde{X},\tilde{W}\rangle
-\langle X,\dt \rangle\langle W,\dt \rangle\langle\tilde{Y},\tilde{Z}\rangle
+\langle X,\dt \rangle\langle Z,\dt \rangle\langle\tilde{Y},\tilde{W}\rangle \\
&=
\langle Y,\dt \rangle \langle W,\dt \rangle\langle X,Z\rangle
-\langle Y,\dt \rangle\langle Z,\dt \rangle\langle X,W\rangle
-\langle X,\dt \rangle\langle W,\dt \rangle\langle Y,Z\rangle
+\langle X,\dt \rangle \langle Z,\dt \rangle\langle Y,W\rangle.
\end{align*}
By inserting in \eqref{equation_curvature_tildeP}, we finally get the result.
\end{proof}
Let $\mm$ be the semi-Riemannian space form of constant sectional
curvature $c=\pm 1$ and index $k$, with metric $g$ and let 
$\PP=\mmm$, with metric $\produ{,}$. We denote by $\bar{R}$
the curvature operator of $\mmm$. Also, we put 
\begin{gather*}
\mathbb{E}^{n+1}=
\begin{cases}
\mathbb{R}^{n+1}_k ,& \textrm{ if }\mathbb{M}^n_k(c)=\mathbb{S}^n_k, \quad
c=+1, \\
\mathbb{R}^{n+1}_{k+1},& \textrm{ if }\mathbb{M}_k^n(c)=\mathbb{H}^n_k,\quad
c=-1,
\end{cases}
\end{gather*}
with its standard metric $g_o$ and Levi-Civita connection $\nabla^o$. We recall that
\[ \mathbb{S}^n_k =  \{ p\in \mathbb{E}^{n+1} : g_o(p,p)=+1\},
\quad
\mathbb{H}^n_k =  \{ p\in \mathbb{E}^{n+1} : g_o(p,p)=-1\}.
\]

From the usual totally umbilical embedding $\Xi:\mm\rightarrow\mathbb{E}^{n+1}$, we construct the following isometric embedding 
\[\widetilde{\Xi}: (\mmm,\produ{,})\longrightarrow (\varepsilon I\times_a\mathbb{E}^{n+1},\produ{,}_2), \quad (t,p)\mapsto (t,\Xi(p)).
\]
In the sequel, for the sake of simplicity, we will also use the notation $\produ{,}_2=\produ{,}$. Let $\nat$ and $\nab$ be the Levi-Civita connection on $\VV$ and $\mmm$, respectively. It is well-known that $\xi=\Xi/c$  is a unit normal vector field satisfying $\nabla^o_X\xi = X/c$ for any $X$ tangent to $T_p\mathbb{M}_k^n(c)$. Thus, we can
consider the normal vector field of $\tilde{\Xi}:\mmm\rightarrow\VV$ as
\[e_{0}(t,p)=(0,\xi (p)/a(t))=\big(0,p/(c\,a(t))\big), \quad
\mathrm{for\ any\ } (t,p)\in\mmm.
\]
We also set $\ep_{0}=\produ{e_{0},e_{0}}=\pm 1$. Since $\mathbb{M}_k^n(c)$ lies naturally in $\mathbb{E}^{n+1}$, the normal vector field $\xi$ satisfies $g_o(\xi,\xi)=c$. In addition, $\ep_0 = \produ{e_0,e_0}=\produ{(0,p/(ac)),(0,p/(ac))} = a^2 g_o(p,p) /(a^2c^2) =c$. In this way, by Lemma \ref{lemma-on-1},
\[ \widetilde{\nabla}_{\dt}e_{0} =\frac{-a'}{a^2}(0,\xi)+\frac{1}{a}\widetilde{\nabla}_{\dt} (0,\xi)
= \frac{-a'}{a^2}(0,\xi) +\frac{1}{a}\frac{a'}{a} (0,\xi) =0.\]
Moreover, if $(0,Z)\perp e_{0}$, then $Z\perp \xi$, so that
\[
\widetilde{\nabla}_{(0,Z)}e_{0} =
\frac{1}{a}\widetilde{\nabla}_{(0,Z)} (0,\xi) = \frac{1}{a} \Big(\nabla_Z^0\xi -\frac{\produ{(0,Z),(0,\xi)}}{a}\grad a\Big)= \frac{1}{ac}(0,Z).
\]
This means that the Weingarten operator $S$ associated with $e_{0}$ has the expression
\begin{equation}
\label{shapeoperatoren+2} S Y = \frac{-1}{ac} ( Y - \ep \produ{Y,\dt}\dt ),
 \mbox{ for any $Y\in T\bar{P}^{n+1}$.}
\end{equation}
\begin{proposition}\label{barR}
The curvature tensor of $\bar{P}^{n+1}=\mmm$ is
\begin{align}
\bar{R}(X,Y,Z,W) &= \left( \ep\frac{(a')^2}{a^2}-\frac{\ep_{0}}{a^2}  \right)\Big(
 \langle X,Z\rangle\langle Y,W\rangle -\langle Y,Z\rangle\langle X,W\rangle \Big)\nonumber \\
& +\left(\frac{a''}{a} -\frac{(a')^2}{a^2}+\frac{\ep\ep_{0}}{a^2}\right)\Big(
\langle X,Z\rangle\langle Y,\dt \rangle\langle W,\dt \rangle
-\langle Y,Z\rangle\langle X,\dt \rangle\langle W,\dt \rangle
\\
&- \langle X,W\rangle\langle Y,\dt \rangle\langle Z,\dt \rangle
+ \langle Y,W\rangle\langle X,\dt \rangle\langle Z,\dt \rangle
\Big), \nonumber
\end{align}
for any $X, Y, Z, W$ in $T\bar{P}^{n+1}$.
\end{proposition}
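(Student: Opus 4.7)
The plan is to view $\PP=\mmm$ as a non-degenerate hypersurface of $\widetilde{P}^{n+2}=\VV$ via the isometric embedding $\widetilde{\Xi}$, and to apply the Gau\ss\ equation \eqref{eq-Gauss} to this codimension-one situation. The ambient curvature $\widetilde{R}$ is given by Proposition \ref{tildeR}, the unit normal $e_0$ has causal character $\ep_0=c$, and its shape operator $S$ is recorded in \eqref{shapeoperatoren+2}. Rearranging \eqref{eq-Gauss} yields
\[
\bar{R}(X,Y,Z,W)=\widetilde{R}(X,Y,Z,W)+\ep_0\bigl(\produ{SY,Z}\produ{SX,W}-\produ{SY,W}\produ{SX,Z}\bigr),
\]
so the whole task reduces to computing the shape-operator correction and combining it with the formula of Proposition \ref{tildeR}.

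From \eqref{shapeoperatoren+2} and $c^2=1$ one reads $\produ{SY,Z}=-\tfrac{1}{ac}\bigl(\produ{Y,Z}-\ep\produ{Y,\dt}\produ{Z,\dt}\bigr)$. Expanding the antisymmetric combination $\produ{SY,Z}\produ{SX,W}-\produ{SY,W}\produ{SX,Z}$ produces three kinds of monomials: products of two ordinary inner products, products of one inner product with two factors of $\produ{\cdot,\dt}$, and purely quartic $\produ{\cdot,\dt}$-monomials. The first observation is that the two quartic terms $\produ{Y,\dt}\produ{Z,\dt}\produ{X,\dt}\produ{W,\dt}$ arising from the two products are literally equal and therefore cancel in the antisymmetric difference. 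What remains is a $\frac{1}{a^2}$-multiple of $\produ{Y,Z}\produ{X,W}-\produ{Y,W}\produ{X,Z}$ plus an $\frac{\ep}{a^2}$-multiple of exactly the four-term $\produ{\cdot,\dt}$-bilinear expression that appears in Proposition \ref{tildeR}.

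Multiplying by $\ep_0$ and adding the result to $\widetilde{R}$, the bivector piece combines with the $\ep(a')^2/a^2$ coefficient of $\widetilde{R}$ to give $\ep(a')^2/a^2-\ep_0/a^2$ in front of $\produ{X,Z}\produ{Y,W}-\produ{Y,Z}\produ{X,W}$, where the sign flip comes from the identity $\produ{Y,Z}\produ{X,W}-\produ{Y,W}\produ{X,Z}=-(\produ{X,Z}\produ{Y,W}-\produ{Y,Z}\produ{X,W})$; the $\produ{\cdot,\dt}$-bilinear piece contributes an extra $\ep\ep_0/a^2$, producing the coefficient $a''/a-(a')^2/a^2+\ep\ep_0/a^2$. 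This recovers the stated formula. The only place where care is needed is to distinguish the two independent sign conventions $\ep$ (from $\produ{\dt,\dt}$) and $\ep_0=c$ (from $\produ{e_0,e_0}$); once they are not conflated, every remaining step is routine algebraic bookkeeping.
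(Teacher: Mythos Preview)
Your proof is correct and follows exactly the approach indicated in the paper: apply the Gau\ss\ equation \eqref{eq-Gauss} to the totally umbilical embedding $\widetilde{\Xi}:\PP\hookrightarrow\widetilde{P}^{n+2}$, using Proposition \ref{tildeR} for $\widetilde{R}$ and \eqref{shapeoperatoren+2} for the shape operator $S$. Your expanded bookkeeping of the $\ep_0\bigl(\produ{SY,Z}\produ{SX,W}-\produ{SY,W}\produ{SX,Z}\bigr)$ correction, including the cancellation of the quartic $\produ{\cdot,\dt}$ terms and the sign flip needed to align the bivector pieces, is all accurate.
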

\begin{proof}
We just need to resort to (\ref{eq-Gauss}), Proposition \ref{tildeR} and (\ref{shapeoperatoren+2}). 
\end{proof}

\section{Hypersurfaces}

Let $M^n$, $n\geq 2$, be an immersed, non-degenerate hypersurface in
$\bar{P}^{n+1}$. Let $\na$ be the Levi-Civita connections on $M$. Let $e_{n+1}$ be a (locally defined) normal unit vector field to $M$, with $\ep_{n+1}=\produ{e_{n+1},e_{n+1}}=\pm 1$. Along  $M$, the vector field $\partial_t$ can be decomposed as its tangent and normal parts, i.~e.,
\[ \partial_t = T+ f\,e_{n+1},\]
where $T$ is  tangent to $M$ and $f=\ep_{n+1} \produ{\dt,e_{n+1}}$. We also define the 1-form on $M$ given by $\eta(X)=\produ{X,T}$, for any $X\in \Gamma(TM)$.  Given a tangent vector $X$ to $M$, we again decompose it in the  part tangent to $\{t\}\times \mm$ and its component in the direction of
$\partial_t$ as $X=\ti{X}+x\partial_t$. Similarly, $Y=\ti{Y}+y\partial_t$ and $e_{n+1}=\ti{e}_{n+1}+n\partial_t$.
\begin{lemma}\label{lemma-T} Under the previous conditions,
\begin{enumerate}
\item $\ep n=\langle e_{n+1},\partial_t\rangle = \ep_{n+1} f$, 
$\ep x=\langle X,\partial_t\rangle =\langle X,T\rangle$,
$\ep y=\langle Y,\partial_t\rangle =\langle Y,T\rangle$,
\item $\langle \ti{X},\ti{e}_{n+1}\rangle = -\ep \ep_{n+1} f\langle X,T\rangle$,
\item $\langle \ti{X},\ti{Y}\rangle =\langle X,Y\rangle -\ep\langle
X,T\rangle\langle Y,T\rangle$.
\end{enumerate}
\end{lemma}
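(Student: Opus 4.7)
The proof is essentially bookkeeping: decompose everything into its $\partial_t$-component and its $\{t\}\times\mm$-component and use the orthogonality $\produ{\ti Z,\dt}=0$ together with $\produ{\dt,\dt}=\ep$.

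For item (1), I would start from $X=\ti X+x\dt$ and take the inner product with $\dt$; since $\ti X\perp\dt$, this immediately gives $\produ{X,\dt}=\ep x$. On the other hand, $X$ is tangent to $M$ while $e_{n+1}$ is normal, so from $\dt=T+f e_{n+1}$ one obtains $\produ{X,\dt}=\produ{X,T}$. The analogous expansions for $Y$ and $e_{n+1}$ are identical, with the only twist being that for $e_{n+1}$ the relation $f=\ep_{n+1}\produ{\dt,e_{n+1}}$ (given in the text just before the lemma) produces $\produ{e_{n+1},\dt}=\ep_{n+1}f$, matching $\ep n$.

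For item (2), I would use that $X\in TM$ and $e_{n+1}\in (TM)^{\perp}$ imply $\produ{X,e_{n+1}}=0$. Expanding both sides with $X=\ti X+x\dt$ and $e_{n+1}=\ti e_{n+1}+n\dt$, and discarding the cross terms $\produ{\ti X,\dt}$ and $\produ{\dt,\ti e_{n+1}}$ which vanish, I get $0=\produ{\ti X,\ti e_{n+1}}+\ep x n$. Substituting $x=\ep\produ{X,T}$ and $n=\ep\ep_{n+1}f$ from item (1) produces $\produ{\ti X,\ti e_{n+1}}=-\ep\ep_{n+1}f\produ{X,T}$.

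Item (3) follows by the same trick applied to $\produ{X,Y}$: expanding $X=\ti X+x\dt$ and $Y=\ti Y+y\dt$ yields $\produ{X,Y}=\produ{\ti X,\ti Y}+\ep xy$, and then $xy=\produ{X,T}\produ{Y,T}$ from item (1) gives the claim. There is no serious obstacle here; the only care needed is to keep the signs $\ep,\ep_{n+1}$ straight and to remember that the tildes denote the components in $\{t\}\times\mm$, which are by definition $\dt$-orthogonal.
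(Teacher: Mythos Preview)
Your proposal is correct and follows essentially the same route as the paper's own proof: decompose each vector into its $\partial_t$-component and its tilde-component, use $\produ{\ti Z,\partial_t}=0$, $\produ{\partial_t,\partial_t}=\ep$, and $\produ{X,e_{n+1}}=0$, then substitute the relations from item~(1) into the expansions for items~(2) and~(3). There is no meaningful difference in method or in the order of the steps.
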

\begin{proof} From ${e}_{n+1} =\tilde{e}_{n+1}+n\partial_t$, it is immediate that $\ep
n=\produ{\partial_t,{e}_{n+1}}=\ep_{n+1}  f$. Further, $\produ{X,\dt}=\produ{X,T+f{e}_{n+1}}=\produ{X,T}=\produ{\ti{X}+x\dt,\dt}=\ep x$.
Next,
$0=\produ{X,{e}_{n+1}}=\produ{\ti{X},\ti{e}_{n+1}}+xn\ep = \produ{\ti{X},\ti{e}_{n+1}}+\ep\ep_{n+1} f\produ{X,T}$.
Finally,
$\produ{X,Y}=\produ{\ti{X}+x\dt,\ti{Y}+y\dt} = \produ{\ti{X},\ti{Y}}+\ep xy
= \produ{\ti{X},\ti{Y}}+\ep \produ{X,T}\produ{Y,T}$.
\end{proof}

Let $A$ be the shape operator of $M$ associated with  $e_{n+1}$.
\begin{proposition} \label{gauss-eq}
The Gau\ss\ equation of $M$ in $\ep I\times_a\mathbb{M}^n_k( c )$ is
\begin{gather*}
R(X,Y,Z,W)=
 \left( \ep\frac{(a')^2}{a^2}-\frac{\ep_{0}}{a^2}  \right)\Big(
 \langle X,Z\rangle\langle Y,W\rangle -\langle Y,Z\rangle\langle X,W\rangle \Big)  
\\
+\left(\frac{a''}{a} -\frac{(a')^2}{a^2}+\frac{\ep\ep_{0}}{a^2}\right)\Big(
\langle X,Z\rangle\langle Y,T \rangle\langle W,T \rangle
-\langle Y,Z\rangle\langle X,T \rangle\langle W,T \rangle \\ 
-\langle X,W\rangle\langle Y,\dt \rangle\langle Z,T \rangle
+ \langle Y,W\rangle\langle X,T \rangle\langle Z,T \rangle\Big)  
\\ +\ep_{n+1}\Big( \produ{AY,Z}\produ{AX,W}-\produ{AY,W}\produ{AX,Z}\Big),
\end{gather*}
for any $X,Y,Z,W\in TM$.
\end{proposition}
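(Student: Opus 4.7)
The plan is to combine the general Gau\ss\ equation \eqref{eq-Gauss} with the explicit formula for the ambient curvature $\bar R$ provided by Proposition \ref{barR}, and then convert inner products against $\dt$ into inner products against $T$ via Lemma \ref{lemma-T}.

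First, I would write the general Gau\ss\ equation \eqref{eq-Gauss} for the hypersurface $M^n$ in $\bar P^{n+1}=\mmm$, taking $\delta=\ep_{n+1}$ and the shape operator to be $A$:
\[
\bar R(X,Y,Z,W)=R(X,Y,Z,W)-\ep_{n+1}\produ{AY,Z}\produ{AX,W}+\ep_{n+1}\produ{AY,W}\produ{AX,Z},
\]
so that
\[
R(X,Y,Z,W)=\bar R(X,Y,Z,W)+\ep_{n+1}\Bigl(\produ{AY,Z}\produ{AX,W}-\produ{AY,W}\produ{AX,Z}\Bigr).
\]

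Next, I would substitute the expression for $\bar R(X,Y,Z,W)$ from Proposition \ref{barR}. This produces exactly the curvature-like term with coefficient $\ep(a')^2/a^2-\ep_0/a^2$ and the $T$-type correction with coefficient $a''/a-(a')^2/a^2+\ep\ep_0/a^2$, but still expressed using $\produ{\cdot,\dt}$.

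Finally, since $X,Y,Z,W\in TM$, Lemma \ref{lemma-T}.1 gives $\produ{X,\dt}=\produ{X,T}$, and similarly for $Y,Z,W$. Replacing each occurrence of $\produ{\cdot,\dt}$ in the bracketed $T$-type expression by $\produ{\cdot,T}$ yields exactly the stated formula. There is essentially no obstacle here beyond careful bookkeeping of the eight scalar quadruples; the only subtlety is to remember that the conversion $\produ{X,\dt}=\produ{X,T}$ is valid precisely because $X$ is tangent to $M$, whereas in Proposition \ref{barR} the arguments live in $T\bar P^{n+1}$ and one must keep $\dt$ there. Once this substitution is made and the two terms are added, the result is identical to the right-hand side of the proposition.
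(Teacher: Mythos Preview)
Your proposal is correct and follows exactly the paper's own proof, which simply cites \eqref{eq-Gauss}, Proposition \ref{barR}, and Lemma \ref{lemma-T}. Your write-up just makes the bookkeeping explicit.
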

\begin{proof}
We resort to (\ref{eq-Gauss}), Proposition \ref{barR} and Lemma \ref{lemma-T}.
\end{proof}
\begin{proposition} The Codazzi equation of $M$ in $\mmm$ is given by
\begin{equation}\label{eq-codazzi}
(\nabla_XA) Y - (\nabla_YA) X =
\ep_{n+1} f \left(\frac{a''}{a} -\frac{(a')^2}{a^2}+\frac{\ep\ep_{0}}{a^2}\right)
\Big(\produ{Y,T}X -\produ{X,T}Y\Big),
\end{equation}
for any $X, Y$ tangent to $M$.
\end{proposition}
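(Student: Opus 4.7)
The plan is to start from the general Codazzi identity \eqref{eq-codazzigeneral} applied to our hypersurface $M \subset \bar{P}^{n+1} = \mmm$ with unit normal $e_{n+1}$, namely
\[
\bar{R}(X,Y,Z,e_{n+1}) = \produ{(\nabla_X A)Y - (\nabla_Y A)X,\,Z},
\]
for all $X,Y,Z \in TM$, and then to evaluate the left-hand side using the explicit curvature formula of Proposition \ref{barR}.

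Substituting $W = e_{n+1}$ in Proposition \ref{barR}, I would first observe that every term containing $\produ{X,e_{n+1}}$ or $\produ{Y,e_{n+1}}$ vanishes because $X,Y$ are tangent to $M$. This immediately kills the entire first block (the one with coefficient $\ep(a')^2/a^2 - \ep_0/a^2$), as well as the two summands in the second block that involve $\produ{X,e_{n+1}}$ or $\produ{Y,e_{n+1}}$. Only two surviving terms remain, namely
\[
\left(\tfrac{a''}{a} - \tfrac{(a')^2}{a^2} + \tfrac{\ep\ep_0}{a^2}\right)\Big(\produ{X,Z}\produ{Y,\dt}\produ{e_{n+1},\dt} - \produ{Y,Z}\produ{X,\dt}\produ{e_{n+1},\dt}\Big).
\]

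Next I would invoke Lemma \ref{lemma-T}: for any vector tangent to $M$ the identity $\produ{X,\dt} = \produ{X,T}$ holds, and for the normal we have $\produ{e_{n+1},\dt} = \ep_{n+1}f$. Plugging these in collapses the expression to
\[
\bar{R}(X,Y,Z,e_{n+1}) = \ep_{n+1} f\left(\tfrac{a''}{a} - \tfrac{(a')^2}{a^2} + \tfrac{\ep\ep_0}{a^2}\right)\Big(\produ{Y,T}\produ{X,Z} - \produ{X,T}\produ{Y,Z}\Big).
\]
Equating this with $\produ{(\nabla_X A)Y - (\nabla_Y A)X,Z}$ and noting that $Z \in TM$ is arbitrary and the metric on $M$ is non-degenerate, the formula in the statement drops out by identifying the $Z$-dual of each side as the stated vector identity.

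No real obstacle is anticipated; the whole argument is bookkeeping. The only point requiring care is keeping track of the sign conventions: checking that the curvature tensor sign convention from \cite{ON} used in Proposition \ref{barR} is consistent with the form of the Codazzi identity \eqref{eq-codazzigeneral}, and ensuring that the identifications $\produ{X,\dt}=\produ{X,T}$ and $\produ{e_{n+1},\dt}=\ep_{n+1}f$ are applied with the correct placement of $\ep$ and $\ep_{n+1}$ so that the final coefficient lands as $\ep_{n+1} f$ (rather than $\ep f$ or $\ep\ep_{n+1} f$).
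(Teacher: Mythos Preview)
Your proposal is correct and follows essentially the same route as the paper's own proof: apply the general Codazzi identity \eqref{eq-codazzigeneral}, substitute $W=e_{n+1}$ into the curvature formula of Proposition~\ref{barR}, drop all terms containing $\produ{X,e_{n+1}}$ or $\produ{Y,e_{n+1}}$, and then use Lemma~\ref{lemma-T} to rewrite $\produ{\cdot,\dt}$ in terms of $T$ and $f$. The only addition you make explicit, which the paper leaves tacit, is the final step of invoking non-degeneracy of the induced metric to pass from the scalar identity in $Z$ to the vector identity.
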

\begin{proof}
By (\ref{eq-codazzigeneral}), we have to compute
$\bar{R}(X,Y,Z,e_{n+1})$ for any tangent vectors $X,Y,Z$ to $M$.  To do so, we recall Proposition \ref{barR}. Thus,
\begin{align*}
&\bar{R}(X,Y,Z,e_{n+1}) =
 \left( \ep\frac{(a')^2}{a^2}-\frac{\ep_{0}}{a^2}  \right)\Big(
 \langle X,Z\rangle\langle Y,{e}_{n+1}\rangle -\langle Y,Z\rangle\langle X,{e}_{n+1}\rangle \Big) \nonumber \\
 & +\left(\frac{a''}{a} -\frac{(a')^2}{a^2}+\frac{\ep\ep_{0}}{a^2}\right)\Big(
\langle X,Z\rangle\langle Y,\dt \rangle\langle {e}_{n+1},\dt \rangle
-\langle Y,Z\rangle\langle X,\dt \rangle\langle {e}_{n+1},\dt \rangle\\
&- \langle X,{e}_{n+1}\rangle\langle Y,\dt \rangle\langle Z,\dt \rangle
+ \langle Y,{e}_{n+1}\rangle\langle X,\dt \rangle\langle Z,\dt \rangle
\Big) \\
&= \ep_{n+1}f \left(\frac{a''}{a} -\frac{(a')^2}{a^2}+\frac{\ep\ep_{0}}{a^2}\right)
\Big(
\produ{Y,T}\produ{X,Z}-\produ{X,T}\produ{Y,Z}\Big).
\end{align*}
This yields the result. \end{proof}

\begin{lemma}\label{nablaT} The following equations hold for any $X$ tangent to $M$:
\begin{enumerate}
\item $\bar{\nabla}_X\dt = \frac{a'}{a}(X-\ep\produ{X,T}\dt)$.
\item $\nabla_XT=\frac{a'}{a}(X-\ep\produ{X,T}T)+fAX$.
\item $X(f)=-\ep_{n+1}\produ{AT,X}-\frac{a'}{a}\ep\produ{X,T}f$.
\end{enumerate}
\end{lemma}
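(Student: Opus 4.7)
The plan is to establish part (1) first in the ambient $\bar P^{n+1}$, and then obtain (2) and (3) simultaneously by decomposing $\bar\nabla_X\partial_t$ into its tangential and normal components along $M$, using the splitting $\partial_t=T+fe_{n+1}$ together with the Gau\ss{} and Weingarten formulas.

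For (1), I would write $X=\tilde X+x\partial_t$ with $\tilde X$ tangent to the fiber and recall from Lemma \ref{lemma-T} that $\ep x=\langle X,T\rangle$. By linearity of the connection and Lemma \ref{lemma-on-1},
\[
\bar\nabla_X\partial_t \;=\; \bar\nabla_{\tilde X}\partial_t + \ep\langle X,T\rangle\,\bar\nabla_{\partial_t}\partial_t \;=\; \tfrac{a'}{a}\tilde X \;=\; \tfrac{a'}{a}\bigl(X-\ep\langle X,T\rangle\partial_t\bigr),
\]
which is (1). No warped-product miracle is needed beyond item (\ref{uno}) of Lemma \ref{lemma-on-1}.

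For (2) and (3), the idea is to compute $\bar\nabla_X\partial_t$ in a second way, by differentiating the identity $\partial_t=T+fe_{n+1}$ and applying the Gau\ss{} and Weingarten formulas to $T$ and $e_{n+1}$. This produces
\[
\bar\nabla_X\partial_t \;=\; \nabla_XT+\ep_{n+1}\langle AX,T\rangle e_{n+1}+X(f)\,e_{n+1}-fAX.
\]
On the other hand, expanding the right-hand side of (1) as $\tfrac{a'}{a}\bigl(X-\ep\langle X,T\rangle T-\ep\langle X,T\rangle f\,e_{n+1}\bigr)$ gives an explicit tangent/normal decomposition. Equating the two expressions and reading off the $TM$-component yields (2), while the $e_{n+1}$-component (after dividing by $\ep_{n+1}$ or simply matching coefficients, using $\langle AX,T\rangle=\langle AT,X\rangle$ by self-adjointness of $A$) yields (3).

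The only subtlety worth watching is sign bookkeeping from $\ep$, $\ep_{n+1}$ and the interplay $f=\ep_{n+1}\langle\partial_t,e_{n+1}\rangle$, but no step is genuinely hard: once (1) is in place, (2) and (3) fall out of a single projection onto $TM\oplus\mathbb{R}e_{n+1}$. I do not anticipate any real obstacle; the statement is essentially the standard identity for the gradient of a height function in a warped product, adapted to arbitrary signatures.
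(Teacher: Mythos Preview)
Your proposal is correct and follows essentially the same approach as the paper: part (1) is proved identically via the splitting $X=\tilde X+\ep\eta(X)\partial_t$ and Lemma~\ref{lemma-on-1}, and parts (2)--(3) are obtained by comparing two expressions for an ambient derivative and projecting onto $TM\oplus\mathbb{R}e_{n+1}$. The only cosmetic difference is that the paper computes $\bar\nabla_XT=\bar\nabla_X(\partial_t-fe_{n+1})$ and then matches it against the Gau\ss\ formula $\bar\nabla_XT=\nabla_XT+\ep_{n+1}\langle AX,T\rangle e_{n+1}$, whereas you compute $\bar\nabla_X\partial_t=\bar\nabla_X(T+fe_{n+1})$ and match it against (1); these are the same identity read in opposite directions.
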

\begin{proof}
First, we recall that $X=\ti{X}+\ep\eta(X)\dt$. Therefore, $\bar{\nabla}_X\dt = \bar{\nabla}_{\ti{X}}\dt = \frac{a'}{a}\ti{X}=\frac{a'}{a}(X-\ep\produ{X,T}\dt)$.
Next, we compute
$\nab_XT=\nab_X(\dt- e_{n+1})=\nab_X\dt-X(f)e_{n+1}-f\nab_Xe_{n+1}
= \frac{a'}{a}(X-\ep\produ{X,T}\dt) -X(f)e_{n+1}+fAX
= \frac{a'}{a}(X-\ep\produ{X,T}(T+fe_{n+1})) -X(f)e_{n+1}+fAX
= \frac{a'}{a}(X-\ep\produ{X,T}T)-\frac{\ep f a'}{a}e_{n+1})) -X(f)e_{n+1}+fAX.$
Now, each equation is just the tangential and the normal part
of $\nab_XT=\na_XT+\ep_{n+1}\produ{AX,T}e_{n+1}$.
\end{proof}

\section{Moving frames}

Elie Cartan developed the \textit{moving frame} technique. Definitions, basic results and some other details can be found in \cite[p. 18]{IL}. 
We will use the following convention on the ranges of indices, unless mentioned
otherwise:
\[ 1\leq i,j,k,l\leq n;\quad
1\leq u,v,w,\ldots\leq n+1;\quad
0\leq \alpha,\beta,\gamma, \ldots\leq n+1.\]
We recall that $M$ is a hypersurface of $\bar{P}^{n+1}$, hence $n=\dim M$.  Let
$(e_0,e_1,\dots,e_n,e_{n+1})$ be a local orthonormal frame on $M$, such that
$e_1,\ldots,e_n$ are tangent to $M$ and $e_{n+1}$ is normal to $M$ in
$\bar{P}^{n+1}$, with $\varepsilon_{\alpha}=\produ{e_{\alpha},e_{\alpha}}=\pm
1$. We define the matrix $G=(\varepsilon_{\alpha}\delta_{\alpha\beta})$. Let $(\omega_0,\ldots,\omega_{n+1})$ be the dual basis of $e_{\alpha}$, i.e.
$\omega_{\alpha}(e_{\beta})=\delta_{\alpha\beta}$, where
$\omega_{r}|_{TM}=0$, $r\in\{0,n+1\}$.  The dual 1-forms $\omega_{\alpha}$ can be obtained as
$\omega_{\alpha}(X) = \varepsilon_{\alpha}\langle e_{\alpha},X\rangle$.

Given  $(E_0,\dots,E_{n})$ a parallel orthonormal frame of $\mathbb{E}^{n+1}$, we construct
$(\E_0,\dots,\E_{n+1})=(\frac{E_0}{a},\dots\frac{E_{n}}{ a},\dt)$,
which  is an orthonormal frame of $\varepsilon I\times_a\mathbb{E}^{n+1}$.
If necessary, we reorder the basis $(\E_0,\ldots,\E_{n+1})$ to obtain
\[
\produ{\E_{\alpha},\E_{\alpha}}=\varepsilon_{\alpha}, \quad
\E_{n+1}=\dt.
\]
Next, we define the functions
$B_{\alpha\beta}:=\langle \E_{\alpha},e_{\beta}\rangle$ and the matrix $B=(B_{\alpha\beta})$.
We have:
\[ \sum_{\mu}\varepsilon_{\mu}B_{\mu\alpha}B_{\mu  \beta } =
\sum_{\mu}\varepsilon_{\mu}\langle \E_{\mu},e_{\alpha}\rangle\,\langle
\E_{\mu},e_{\beta }\rangle = \langle e_{\alpha},e_{\beta}\rangle
=\varepsilon_{ \alpha }\delta_{\alpha\beta }.\]
This equation reduces to $B^t GB=G$, which implies
$B^{-1}=GB^tG$, where $B^t$ is the transpose of $B$ and
$B^{-1}=(B^{\alpha\beta})$. Next, from the fact that $B^t GB=G$, we define the sets
\begin{gather*}
\mathbf{S} = \{ Z\in \mathcal{M}_{n+2}(\R) | Z^t G Z = G,\ \det Z=1\},\\
\mathfrak{s} = \{ H\in \mathcal{M}_{n+2}(\R) | H^tG+GH=0\}.
\end{gather*}
The set $\S$ is the connected component of the identity matrix, and is hence 
isometric to the Lie group $O^{+\uparrow}(n+2,q)$, where the
index of the metric is $q= k+\frac{\vert c-1\vert}{2}+\frac{\vert\varepsilon
-1\vert}{2}$. Clearly, $\mathfrak{s}$ is the Lie algebra associated with
$\mathbf{S}$. In other words, we have constructed a map $B:M\rightarrow \mathbf{S}$, and therefore, we immediately obtain the  $\mathfrak{s}$-valued 1-form
$B^{-1}dB$ on $M$. Let us now define the connection 1-forms $\Omega=(\omega_{\alpha\beta})$, 
\[
\omega_{\alpha\beta}(X)=\ep_{\alpha}\produ{e_{\alpha},\widetilde{\nabla}_X e_{\beta}}, \quad \mathrm{for\ any}\ X\in TM.
\]
The matrix $\Omega$ satisfies $ \Omega^tG+G\Omega =0$, or equivalently, 
$\omega_{\beta\alpha}=-\ep_{\alpha}\ep_{\beta}\omega_{\alpha\beta}.$ In particular,
\begin{equation}\label{connection-1-f}
\nabla e_i = \sum_k\omega_{ki} e_k, \quad
\bar{\nabla} e_{u} = \sum_{v}\omega_{vu} e_{v}, \quad
\widetilde{\nabla} e_{\alpha} = \sum_{\gamma} \omega_{\gamma\alpha} e_{\gamma},
\end{equation}
We now define the 1-form $\eta(X)=\langle T,X\rangle$ and functions
$T_{k}=\produ{e_{k},T}$, $T_{n+1}=\ep_{n+1} f$ and $T_{0}=0$. Clearly,
$\sum_kT_k\omega_k=\eta$. Obviously, we can recover the vectors
$e_{\beta}=\sum_{\gamma}\varepsilon_{\gamma}B_{\gamma\beta}\E_{\gamma}$.
Consequently, by (\ref{connection-1-f}), 
\begin{gather*}
\widetilde{\nabla}_{e_\alpha}e_\beta = \sum_{\mu}\omega_{\mu\beta}(e_{\alpha})e_{\mu}=\sum_{\gamma}\varepsilon_{\gamma}\Big(\sum_{\mu}
\omega_{\mu\beta}(e_\alpha)B_{\gamma\mu}\Big)\E_{\gamma} \\
=
\widetilde{\nabla}_{e_{\alpha}}\Big(\sum_{\gamma} \ep_{\gamma} B_{\gamma\beta}\E_{\gamma}\Big)
=\sum_{\gamma}\ep_{\gamma} dB_{\gamma\beta}(e_{\alpha})\E_{\gamma}
+ \sum_{\mu,\gamma} \varepsilon_{\gamma}\varepsilon_{\mu}
B_{\mu\alpha}
   B_{\gamma\beta}\widetilde{\nabla}_{\E_{\mu}}\E_{\gamma}.
\end{gather*}
By now, we just care for the last summand. To do so,
\begin{eqnarray*}
\widetilde{\nabla}_{\E_{n+1}}\E_{n+1}&=&\widetilde{\nabla}_{\dt}\dt=0,\quad 
\widetilde{\nabla}_{\E_{u}}\E_{n+1}
=\frac{1}{a}\widetilde{\nabla}_{E_{u}}\dt
=\frac{a'}{a}\E_{u},\\
\widetilde{\nabla}_{\E_{n+1}}\E_{u}&=&
-\frac{a'}{a^2}E_{u}+\frac{1}{a}\widetilde{\nabla}_{\dt}E_{u}=
-\frac{a'}{a^2}E_{u}+\frac{a'}{a^2}E_{u}=0, \\
\widetilde{\nabla}_{\E_{u}}\E_{v} &=&
\frac{1}{a^2}\widetilde{\nabla}_{E_{u}}E_{v}=
\frac{1}{a^2}\nabla^o_{E_{u}}E_{v}
-\frac{\langle \E_{u},\E_{v}\rangle}{a} \textrm{grad}(a) =-\frac{\varepsilon_{u}\delta_{u v}\varepsilon a'}{a}\dt.
\end{eqnarray*}
Consequently, by using the fact that the terms for $\mu=n+1$ vanish, we have
\begin{eqnarray*}
&&\widetilde{\nabla}_{e_\alpha}e_\beta - \sum_{\gamma}\ep_{\gamma} dB_{\gamma\beta}(e_{\alpha})\E_{\gamma} =
\sum_{\mu,\gamma} \varepsilon_{\gamma}\varepsilon_{\mu}B_{\mu\alpha}
B_{\gamma\beta}\widetilde{\nabla}_{\E_{\mu}}\E_{\gamma}
\\ && =
\sum_{u,v}\varepsilon_{v}\varepsilon_{u}B_{u\alpha}B_{v\beta}
\widetilde{\nabla}_{\E_{u}}\E_{v}
+\sum_{u}\varepsilon_{0}\varepsilon_{u}B_{u\alpha} B_{0 \beta}\widetilde{\nabla}_{\E_{u}}\E_{0}
\\&&
=
\ep_{0}\frac{a'}{a}\sum_{v}\varepsilon_{v}B_{v\alpha }B_{0 \beta}\E_{v} 
-\frac{\ep_{0} a'}{a}\sum_{u}\varepsilon_{u}B_{u\alpha}B_{u\beta}\E_{0}.
\end{eqnarray*}
By comparing coordinates, we get for $\gamma=n+1$
\begin{equation}\label{eq_movingframes1}
\sum_{\mu}B_{n+1\,\mu}\omega_{\mu\beta}(e_\alpha)=
dB_{n+1\,\beta}(e_{\alpha})-\frac{a'}{a}\sum_{u}\varepsilon_{u}B_{u\alpha}B_{u\beta}.
\end{equation}
and for $\gamma=0,\ldots,n$,
\begin{equation}\label{eq_movingframes2}
\sum_{\mu}B_{\gamma\mu}\omega_{\mu\beta}(e_\alpha)=
 dB_{\gamma\beta}(e_{\alpha})
   +\frac{\varepsilon a'}{a} B_{\gamma\alpha}B_{n+1\,\beta}.
\end{equation}
Using the fact that $B_{\mu\alpha}=\sum_{\gamma}B_{\mu\gamma}\omega_{ \gamma }(e_{\alpha})$, we get for equation \eqref{eq_movingframes1}, 
$\sum_{\mu}B_{n+1\,\mu}\omega_{\mu\beta}
-dB_{n+1\,\beta}=-\frac{a'}{a}\sum_{\gamma}\sum_{u}\varepsilon_{u}B_{u\beta}B_{u\gamma }
\omega_{\gamma }=
-\frac{a'}{a}\sum_{\gamma,\mu}\varepsilon_{\mu}B_{\mu\beta}B_{\mu \gamma }\omega_{\gamma}
+\varepsilon\frac{a'}{a}\sum_{\gamma}B_{n+1\,\beta}B_{n+1\,\gamma}\omega_{\gamma}$, 
and for equation \eqref{eq_movingframes2}, 
$\sum_{\mu}B_{\gamma\mu}\omega_{\mu\beta}=
dB_{\gamma\beta}+\frac{\varepsilon a'}{a}\sum_{\kappa}B_{n+1\, \beta} B_{\gamma\kappa}\omega_{ \kappa }$, for any $\gamma=0,\ldots,n+1.$ 
Finally, for all $\gamma=\alpha$
\[
\sum_{\mu}B_{\alpha\mu}\omega_{\mu\beta}  = 
dB_{\alpha\beta}
+\frac{\varepsilon a'}{a}B_{n+1\,\beta}\sum_{\gamma}B_{\alpha\gamma }\omega_{ \gamma}
- \frac{a'}{a}\ep_{\beta}\delta_{\alpha\,n+1}\omega_{\beta}.
\] 
Moreover, we have
$\sum_{\mu}B^{\alpha\mu}\delta_{\mu 0}\ep_{\beta}\omega_{\beta}
=B^{\alpha 0}\ep_{\beta}\omega_{\beta}
=\ep_{\beta}\ep_{\alpha}\ep B_{0 \alpha}\omega_{\beta}$, that is,
\begin{equation}\label{allalpha}
\omega_{\alpha\beta} = \sum_{\mu}B^{\alpha\mu}dB_{\mu\beta}+
\frac{\ep a'}{a}\Big(B_{n+1\,\beta}\omega_{\alpha}
-\ep_{\beta}\ep_{\alpha}B_{n+1\,\alpha}\omega_{\beta}\Big).
\end{equation}
Finally, we obtain
\begin{equation}\label{MaurerCartan}
 \Omega -\X= B^{-1}dB, \quad \X_{\alpha\beta} =
\frac{\ep a'}{a}\Big(B_{n+1\,\beta}\omega_{\alpha}
-\ep_{\beta}\ep_{\alpha}B_{n+1\,\alpha}\omega_{\beta}\Big). 
\end{equation}
We point out that $B_{n+1\,\alpha} =\produ{\E_{n+1},e_{\alpha}}=\produ{\dt,e_{\alpha}}=T_{\alpha}$.  

\section{Main Theorem}

Let $(M,\produ{,})$ be a  semi-Riemannian manifold with its Levi-Civita connection $\na$, its Riemann tensor $R$.   We choose numbers $\ep,\ep_0,\ep_{n+1}\in\{-1,1\}$   and $c=\ep_{0}$,
and smooth functions $a:I\subset\R\rightarrow\R^{+}$, $T_{n+1}:M\rightarrow\R$ and $\pi:M\rightarrow I$.  We construct the vector field $T\in\mathfrak{X}(M)$ by $T=\ep\, \grad(\pi)$,  with its 1-form $\eta(X)=\produ{X,T}$. Also, consider a  tensor $A$ of type (1,1) on $M$. 
\begin{definition}\label{structureconditions} \rm Under the previous conditions, we will say that $M$ satisfies the \emph{structure conditions} if the following conditions hold:
\begin{enumerate}[(A)]
\item $A$ is $\produ{,}$-self adjoint;
\item\label{condA} $\ep = \produ{T,T}+\ep_{n+1}T_{n+1}^2$;
\item\label{condB} $\nabla_XT=\frac{a'\circ\pi}{a\circ\pi}(X-\ep\eta(X)T)+\ep_{n+1}T_{n+1}AX$, for any $X\in TM$;
\item\label{condC} $X(T_{n+1})=-\produ{AT,X}-\ep\frac{a'\circ\pi}{a\circ\pi} T_{n+1}\eta(X)$, for any $X\in TM$;

\item  \label{eq-codazziM} \textit{Codazzi equation:} for any $X, Y\in TM$, it holds
\[(\nabla_XA) Y - (\nabla_YA) X =
T_{n+1}  \left(\frac{a''\circ\pi}{a\circ\pi} -\frac{(a'\circ\pi)^2}{(a\circ\pi)^2}+\frac{\ep\ep_{0}}{(a\circ\pi)^2}\right) \Big(\eta(Y)X -\eta(X)Y\Big);
\]

\item \textit{Gau\ss\, equation:} \label{eq-GaussM} for any $X,Y,Z,W\in TM$, it holds 
\begin{gather*}
R(X,Y,Z,W)=
 \left( \ep\frac{(a'\circ\pi)^2}{(a\circ\pi)^2}-\frac{\ep_{0}}{(a\circ\pi)^2}  \right)
 \Big( \produ{X,Z}\produ{Y,W}-\produ{Y,Z}\produ{X,W}\Big) \nonumber \\
\quad  +\left(\frac{a''\circ\pi}{a\circ\pi} -\frac{(a'\circ\pi)^2}{(a\circ\pi)^2}+\frac{\ep\,\ep_{0}}{(a\circ\pi)^2}\right)
 \Big(\produ{X,Z}\eta(Y)\eta(W) -\produ{Y,Z}\eta(X)\eta(W)
\\ 
- \produ{X,W}\eta(Y)\eta(Z)+ \produ{Y,W}\eta(X)\eta(Z)\Big)
 +\ep_{n+1}\Big( \produ{AY,Z}\produ{AX,W}-\produ{AY,W}\produ{AX,Z}\Big).
\end{gather*}
\end{enumerate}
\end{definition}
We recall the warped product $(\bar{P}^{n+1}=I\times \mathbb{M}_k^n(c),\produ{,}_1=\ep dt^2+a^2g_o)$.

\begin{theorem} \label{main} Let $(M,\produ{,})$ a semi-Riemannian manifold satisfying the structure conditions. Then, for each point $p\in M$, there exists a neighborhood $\mathcal U$ of p on $M$, a metric immersion $\chi:(\mathcal {U},\produ{,})\rightarrow (\bar{P}^{n+1},\produ{,}_1)$ and a normal unit vector field $e_{n+1}$ along $\chi$ such that:
\begin{enumerate}
\item $\ep_{n+1}=\produ{e_{n+1},e_{n+1}}_1$;
\item $\pi_I\circ\chi=\pi$, where $\pi_I:\mmm\rightarrow I$ is the projection;
\item The shape operator associated with $e_{n+1}$ is $A$;
\item (\ref{eq-codazziM}) is the Codazzi equation, and  (\ref{eq-GaussM}) is the Gau\ss\, equation,
\item  \label{dt} and along $\chi$, it holds
$\dt = T+\ep_{n+1}T_{n+1}e_{n+1}$.
\end{enumerate}
\end{theorem}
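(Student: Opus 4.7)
The proof proceeds by E. Cartan's moving frame method, converting the existence of $\chi$ into the local integrability of a single $\mathfrak{s}$-valued 1-form built out of the abstract data. The algebraic blueprint is provided by \eqref{allalpha}--\eqref{MaurerCartan} of the previous section: in the presence of an ambient immersion, the matrix $B$ of frame coordinates satisfies $B^{-1}dB=\Omega-\X$, so conversely we shall define $\Omega-\X$ from the structure data on $M$ alone and integrate.

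Fix $p\in M$, shrink to a neighborhood $\mathcal{U}$ on which an orthonormal frame $(e_1,\dots,e_n)$ of $TM$ with signs $\ep_i$ is available, let $(\omega_1,\dots,\omega_n)$ be its dual coframe, and set $\omega_0=\omega_{n+1}=0$, $T_i=\eta(e_i)$, $T_0=0$. Define an $\mathfrak{s}$-valued matrix of 1-forms $\Omega=(\omega_{\alpha\beta})$, $0\leq\alpha,\beta\leq n+1$, as follows: for $1\leq i,j\leq n$ take the Levi-Civita connection 1-forms $\omega_{ij}$ of $\na$ on $M$; for the $(n+1)$-row take $\omega_{n+1,i}(X)=\ep_{n+1}\produ{AX,e_i}$, dictated by Weingarten; for the $0$-row take $\omega_{0\beta}$ determined, via the abstract data, by the ambient Weingarten formula \eqref{shapeoperatoren+2} with $a$, $\dt$-coordinates replaced by $a\circ\pi$, $T_\alpha$ respectively; and complete the matrix using the $\mathfrak{s}$-antisymmetry $\omega_{\beta\alpha}=-\ep_\alpha\ep_\beta\omega_{\alpha\beta}$. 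Finally define $\X$ by $\X_{\alpha\beta}=\tfrac{\ep\,(a'\circ\pi)}{a\circ\pi}(T_\beta\omega_\alpha-\ep_\alpha\ep_\beta T_\alpha\omega_\beta)$ exactly as in \eqref{MaurerCartan}. Condition (A) makes $\Omega$ antisymmetric in the $\mathfrak{s}$-sense, while the algebraic identity (B) is precisely the requirement that the phantom vector $\dt=T+\ep_{n+1}T_{n+1}e_{n+1}$ be a unit vector.

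The crux of the proof is the verification of the Maurer-Cartan equation
\[d(\Omega-\X)+(\Omega-\X)\wedge(\Omega-\X)=0\quad\text{on }\mathcal{U}.\]
This splits by index pair $(\alpha,\beta)$ into blocks, each of which matches one ingredient of the structure conditions. The $(i,j)$-block, after substituting the ambient curvature formula from Proposition \ref{barR}, reduces to the Gau\ss\, equation (F); the $(n+1,i)$-block collapses to the Codazzi equation (E); the $(0,i)$- and $(0,n+1)$-blocks, obtained by direct expansion using the formula for $\omega_{0\beta}$, encode the first-order evolution of $T$ and $T_{n+1}$ along $M$ and reduce to conditions (C) and (D) respectively; the remaining blocks are either trivially zero or follow from the foregoing by antisymmetry. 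This block-by-block matching is the main technical obstacle: the cross terms $\Omega\wedge\X$ and $\X\wedge\X$ must reproduce exactly the curvature coefficients $\ep(a')^2/a^2-\ep_0/a^2$ and $a''/a-(a')^2/a^2+\ep\ep_0/a^2$ appearing in Proposition \ref{barR}, so the fit is tight and each of the six structure conditions is used.

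Once integrability is established, shrink $\mathcal{U}$ to be simply connected and invoke Cartan's theorem to obtain a smooth map $B:\mathcal{U}\to\S$, unique up to left translation, with $B^{-1}dB=\Omega-\X$; choose $B(p)$ freely. Define $\chi:\mathcal{U}\to\PP$ by letting the $I$-component be $\pi$ itself and the $\mm$-component be the $0$-th column of $B$, read as a point of $\mm\subset\mathbb{E}^{n+1}$ via the totally umbilical embedding $\Xi$ and the identification $e_0=(0,\Xi/(ac))$. The columns $1,\dots,n$ of $B$ then push forward to an orthonormal tangent frame along $\chi$, the column $n+1$ to a unit normal field $e_{n+1}$, and the identities \eqref{connection-1-f} together with $B^{-1}dB=\Omega-\X$ force $\chi$ to be a metric immersion with shape operator $A$; item \ref{dt} and $\pi_I\circ\chi=\pi$ are built into the construction by the very definitions of $T_\alpha$ as the $\dt$-coordinates and of $\pi$ as the $I$-component.
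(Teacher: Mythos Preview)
Your outline follows the paper's strategy exactly: build $\Omega$ and $\X$ from the abstract data, verify the Maurer--Cartan equation for $\Upsilon=\Omega-\X$ block by block, integrate to a map $B:\mathcal U\to\S$, and read off $\chi$ from the $0$-th column together with $\pi$. Your identification of which structure conditions feed into which block of $d\Upsilon+\Upsilon\wedge\Upsilon$ is correct.

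There is, however, one genuine gap. You write ``choose $B(p)$ freely'' and then assert that items (2) and (5) are ``built into the construction by the very definitions of $T_\alpha$''. This is where the argument breaks. The functions $T_\alpha$ are defined on $M$ from the abstract data, whereas the $\dt$-coordinates of the frame $e_\alpha$ along $\chi$ are the numbers $B_{n+1,\alpha}=\produ{\E_{n+1},e_\alpha}$. These are a priori unrelated, and \emph{every} subsequent step depends on the equality $B_{n+1,\alpha}(x)=T_\alpha(x)$ for all $x\in\mathcal U$: without $B_{n+1,0}=0$ the $0$-th column need not land in $\mm$; without $B_{n+1,k}=T_k$ the computation of $d\chi(e_k)$ does not yield the $k$-th column of $B$; and item (5) is literally the statement $B_{n+1,\alpha}=T_\alpha$.

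The paper handles this by restricting the integration problem to the submanifold $\mathcal F=\{(x,Z):Z_{n+1,\beta}=T_\beta(x)\}$ of $\mathcal U\times\S$ and checking that the graph distribution $\ker\Theta$ is tangent to $\mathcal F$ (Lemma \ref{prop_map1}). Concretely, one must verify $(Z\Theta)_{n+1,\beta}=0$ whenever $Z_{n+1,\gamma}=T_\gamma$; this is a separate computation beyond the Maurer--Cartan check, and it is exactly where conditions (B), (C), (D) enter a second time (via the formulas for $dT_k$ and $dT_{n+1}$). So $B(p)$ cannot be chosen freely: it must lie in $\mathcal Z(p)$, and the preservation of that constraint is a substantive lemma you have not supplied.
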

\begin{proof}
Given a point $x\in M$, around it we consider a local orthonormal frame $\{e_1,\ldots,e_n\}$ on $M$, with their signs $\ep_i=g(e_i,e_i)=\pm 1$, and its corresponding dual basis of 1-forms $\{\omega_1,\ldots,\omega_n\}$. We point out that an alternative definition for these 1-forms is
$\omega_i(X)=\ep_i\produ{e_i,X}$, for any $X\in TM$. We also need to define $\omega_{n+1}=\omega_{0}=0.$ With the help of the tensor $SY = -\big(Y-\ep \eta(Y)T\big)/(ac)$, for any $Y\in TM$, we construct the following 1-forms
\begin{align}
&\omega_{ij}(X)=\ep_i \produ{e_i,\nabla_Xe_j}, \quad \omega_{i\,n+1}(X)=-\ep_i \produ{e_i,AX},\nonumber \\
&\omega_{i 0}(X)=-\ep_i\produ{e_i,SX},  \quad
\omega_{n+1,0}=-\frac{\ep\ep_{n+1}}{c (a\circ\pi)}T_{n+1}\eta,
\label{connforms} 
\quad 
\omega_{\alpha\beta}=-\ep_{\alpha}\ep_{\beta}\omega_{\beta\alpha}, 
\end{align}
for any $X\in TM$, known as the \textit{connection 1-forms}. In this way, we consider the $\mathfrak{s}$-valued matrix $\Omega=(\omega_{\alpha\beta})$. As a consequence, we get $\nabla e_i = \sum_k\omega_{ki}e_k.$ Now, we define the functions $T_i=\eta(e_i)$, $i\in\{1,\ldots,n\}$, $T_{0}=0$. We point out that by condition (\ref{condA}), we have $\ep=\sum_{\gamma}\ep_{\gamma}T_{\gamma}^2.$ Next,  we also construct the matrices $\X=(\X_{\alpha\beta})$ and $\Upsilon$ as
\begin{equation} \label{refini}
\X_{\alpha\beta} = \frac{\ep a'}{a}\big(T_{\beta}\omega_{\alpha}-\ep_{\alpha}\ep_{\beta}T_{\alpha}\omega_{\beta}\big), \quad \Upsilon = \Omega-\X.
\end{equation}
A simple computation shows
\[
d\Upsilon+\Upsilon\wedge\Upsilon = d\Omega-d\X+\Omega\wedge\Omega-\Omega\wedge \X-\X\wedge\Omega+\X\wedge\X.
\]
Thus, our target consist of proving that the second half of this equality vanishes. Since the
computation is rather lengthy, we will split it in some lemmata.
\begin{lemma} \label{deta} $d\eta=0$.
\end{lemma}
\begin{proof} 
Since $T=\ep\,\grad(\pi)$, we obtain that $\eta= \ep d\pi$. Therefore, $d\eta =0$.
\end{proof}
We define  the matrices $\varpi=(\omega_{\alpha})$ and
$\Gamma=(\Gamma_{\alpha\beta})=d\Omega+\Omega\wedge\Omega$.
\begin{lemma}\label{Cartan-first}
\begin{align*}
d\varpi&=-\Omega\wedge\varpi, \quad
\Gamma_{\alpha\beta} =-\ep_{\alpha}\ep_{\beta}\Gamma_{\beta\alpha},
\\
\Gamma_{ij} &= \ep\ep_j \frac{(a')^2}{a^2} \omega_j\wedge\omega_i
 -\left(\frac{a''}{a} -\frac{(a')^2}{a^2}\right)
\big( T_j\omega_i-\ep_i\ep_jT_i\omega_j\big)\wedge\eta,\\
\Gamma_{i\,n+1}&=
T_{n+1}\Big(\frac{a''}{a}-\frac{(a')^2}{a^2}\Big)\eta\wedge\omega_i,\quad 
\Gamma_{u 0}=0,
\end{align*}
\end{lemma}
\begin{proof} Given $X,Y\in TM$, since $\omega_{n+1}=\omega_{0}=0$, we have
\begin{align*}
& d\omega_{i}(X,Y)  = X(\omega_{i}(Y))-Y(\omega_{i}(X))-\omega_{i}([X,Y]) \\ &
=\ep_{i} \produ{\nabla_Xe_{i},Y}+\ep_{i}\produ{e_{i},\nabla_XY}
- \ep_{i} \produ{\nabla_Ye_{i},X}-\ep_{i}\produ{e_{i},\nabla_YX}
-\ep_{i}\produ{e_{i},[X,Y]}
\\
&=\ep_{i}\sum_{k} \omega_{k i} (X)\produ{e_{k},Y}
-\ep_{i}\sum_{k} \omega_{k i} (Y)\produ{e_{k},X}
=-\sum_{\gamma} \omega_{i\gamma}\wedge\omega_{\gamma} (X,Y).
\end{align*}
On the other hand, by (\ref{connforms}),
\begin{align*}
&\sum_{\gamma}\omega_{n+1\,\gamma}\wedge\omega_{\gamma}(X,Y)=
\sum_{k}\omega_{n+1\,k}\wedge\omega_{k}(X,Y)\\
&= \sum_k\big(\ep_{n+1}\produ{e_k,AX}\ep_k\produ{e_k,Y}
- \ep_{n+1}\produ{e_k,AY}\ep_k\produ{e_k,X}\big)
\\& =\ep_{n+1}\big(\produ{Y,AX}-\produ{X,AY}\big)=0=-d\omega_{n+1}(X,Y).
\end{align*}
Also,
\begin{align*}
&\sum_{\gamma}\omega_{0 \gamma}\wedge\omega_{\gamma}(X,Y) =
\sum_{k}\omega_{0 k}\wedge\omega_{k}(X,Y) \\
& = \frac{1}{ac}\sum_k \Big( -\ep_{0}\big( \produ{e_k,X}-\ep T_k\eta(X)\big)\omega_k(Y)
+\ep_{0}\big( \produ{e_k,Y}-\ep T_k\eta(Y)\big)\omega_k(X)\Big)
\\ & = \frac{\ep_{0}}{ac}\Big( -\produ{Y,X}+\ep\eta(Y)\eta(X)+\produ{X,Y}-\ep\eta(X)\eta(Y)\Big) =0
= -d\omega_{0}(X,Y).
\end{align*}
Next, given $X,Y\in TM$, we compute
\begin{align*}
d\omega_{i j }(X,Y)&=X(\omega_{i j }(Y))
-Y(\omega_{i j }(X))-\omega_{i j }([X,Y]) \\
&=\ep_{i }X(\produ{e_{i },\na_{Y}e_{j }})
-\ep_{i }Y(\produ{e_{i },\na_{X}e_{j }})
-\ep_{i }\produ{e_{i },\na_{[X,Y]}e_{j }} \\
&=\ep_{i }\produ{\na_{X}e_{i },\na_{Y}e_{j }}
- \ep_{i }\produ{\na_{Y}e_{i },\na_{X}e_{j }}
+\ep_{i }{R}(X,Y,e_{j },e_{i }).
\end{align*}
On the other hand, by (\ref{connection-1-f}), 
$\produ{\na_{X}e_{i },\na_{Y}e_{j }} =
\sum_{ k }\ep_{ k } \omega_{ k  i }(X)\omega_{ k  j }(Y) =
-\sum_{ k }\ep_{i }\omega_{i  k }(X)\omega_{ k  j }(Y).$ Therefore, 
\[d\omega_{i j }(X,Y) = -\sum_k \omega_{ik}\wedge\omega_{kj}(X,Y)-\ep_{i} R(X,Y,e_i,e_j), \]
which implies
\begin{align*}
& d\omega_{i j }(X,Y) +\sum_{\gamma} \omega_{i \gamma}\wedge\omega_{\gamma j }(X,Y)\\
&=\omega_{i0}\wedge\omega_{0j}(X,Y)+\omega_{i\,n+1}\wedge\omega_{n+1,j}(X,Y)
+\ep_{i } {R}(X,Y,e_{j},e_{i}) \\
& = -\ep_i\ep_{n+1}\produ{e_i,AX}\produ{e_j,AY} + \ep_i\ep_{n+1}\produ{e_i,AY}\produ{e_j,AX} 
 -\ep_i\ep_{0}\produ{e_i,SX}\produ{e_j,SY} \\
&\quad + \ep_i\ep_{0}\produ{e_i,SY}\produ{e_j,SX} 
+\ep_i\Bigg(\Big( \ep\frac{(a')^2}{a^2}-\frac{\ep_{0}}{a^2}\Big)\Big(
 \produ{X,e_j}\produ{Y,e_i}-\produ{Y,e_j}\produ{X,e_i}\Big)  \\
&\quad  +\left( \frac{\ep\ep_0 }{a^2 } +\frac{a''}{a} -\frac{ (a')^2 }{ a^2 } \right)
\Big(\produ{X,e_j}\eta(Y)\eta(e_i) -\produ{Y,e_j}\eta(X)\eta(e_i) \\
 &\quad
- \produ{X,e_i}\eta(Y)\eta(e_j)+ \produ{Y,e_i}\eta(X)\eta(e_j)\Big)
\\ &\quad
 +\ep_{n+1}\Big( \produ{AY,e_j}\produ{AX,e_i}-\produ{AY,e_i}\produ{AX,e_j}\Big)
  +\ep_{0}\Big( \produ{SY,e_j}\produ{SX,e_i}-\produ{SY,e_i}\produ{SX,e_j}\Big) \Bigg)
  \\
& =  \ep\ep_j \frac{(a')^2}{a^2} \omega_j\wedge\omega_i(X,Y)
 -\left(\frac{a''}{a} -\frac{(a')^2}{a^2}\right)
\big( T_j\omega_i-\ep_i\ep_jT_i\omega_j\big)\wedge\eta (X,Y).
\end{align*}
Next, given $X,Y\in TM$, we compute
\begin{align*}
& d\omega_{i n+1 }(X,Y)=X(\omega_{i n+1 }(Y))
-Y(\omega_{i n+1 }(X))-\omega_{i n+1 }([X,Y]) \\
&=-\ep_i\produ{\nabla_Xe_i,AY} +\ep_i\produ{\nabla_Ye_i,AX}
+\ep_i\produ{e_i,(\nabla_YA)X-(\nabla_XA)Y}\\
&=-\ep_i\sum_k\omega_{ki}(X)\produ{e_k,AY} + 
\ep_i\sum_k\omega_{ki}(Y)\produ{e_k,AX}+\ep_i\produ{e_i,(\nabla_YA)X-(\nabla_XA)Y}\\
&=\sum_k\Big( \omega_{k\,n+1}(X)\omega_{ik}(Y)- \omega_{k\,n+1}(Y)\omega_{ik}(X)\Big)
+\ep_i\produ{e_i,(\nabla_YA)X-(\nabla_XA)Y}\\
&=-\sum_{\gamma}\omega_{i\gamma}\wedge\omega_{\gamma\,n+1}(X,Y)+
T_{n+1}\Big(\frac{a''}{a}-\frac{(a')^2}{a^2}\Big)\eta\wedge\omega_i(Y,X).
\end{align*}
The next case is 
\begin{align*}
d\omega_{i0}(X,Y)&=X(\omega_{i0}(Y))-Y(\omega_{i0}(X))-\omega_{i0}([X,Y])\\
&=\ep_i\big( \produ{\nabla_Ye_i,SX}- \produ{\nabla_Xe_i,SY}
-\produ{e_i,(\nabla_YS)X-(\nabla_XS)Y}\big).
\end{align*}
On one hand, for any $U\in\mathfrak{X}(M)$, it holds $\nabla_Y \big(\frac{-1}{ac} U \big) = \frac{\ep a'}{ca^2}\eta(Y) U -\frac{1}{ac}\nabla_YU$, so that
\begin{align*}
&(\nabla_YS)X-(\nabla_XS)Y = \nabla_YSX-S\nabla_YX-\nabla_XSY+S\nabla_XY \\
&= \frac{\ep a'}{ca^2}\eta(Y) \big(X-\ep\eta(X)T\big) 
-\frac{1}{ac}\nabla_Y(X-\ep\eta(X)T)+\frac{1}{ac}\nabla_YX-\frac{\ep}{ac}\eta(\na_YX)T\\
&\quad -\frac{\ep a'}{ca^2}\eta(X) \big(Y-\ep\eta(Y)T\big) 
+\frac{1}{ac}\nabla_X(Y-\ep\eta(Y)T)+\frac{1}{ac}\nabla_XY-\frac{\ep}{ac}\eta(\na_XY)T\\
&= \frac{\ep a'}{ca^2}\big(\eta(Y)X-\eta(X)Y\big) +\frac{\ep}{ac}\Big(\eta(X)\nabla_YT-\eta(Y)\nabla_XT\Big) \\
&= \frac{\ep \ep_{n+1}T_{n+1} }{a c} (\eta(X)AY-\eta(Y)AX).
\end{align*}
Therefore, we have
\begin{align*}
&d\omega_{i0}(X,Y)=
\ep_i\Big( \produ{\nabla_Ye_i,SX}- \produ{\nabla_Xe_i,SY}
+\frac{\ep\ep_{n+1}T_{n+1}}{ac}\produ{e_i,\eta(X)AY-\eta(Y)AX}\Big)\\
&=\ep_i\Big( \sum_{k}\big( \omega_{ki}(Y)\produ{e_k,SX}- \omega_{ki}(X)\produ{e_k,SY}\big) 
+\frac{\ep\ep_{n+1}T_{n+1}}{ac}\produ{e_i,\eta(X)AY-\eta(Y)AX}\Big)\\
&=-\sum_{\gamma}\omega_{i\gamma}\wedge\omega_{\gamma 0}(X,Y)
+\omega_{i\,n+1}\wedge\omega_{n+1,0}(X,Y)-\frac{\ep\ep_{n+1}T_{n+1}}{ac}\eta\wedge\omega_{i\,n+1}(X,Y)\\
&=-\sum_{\gamma}\omega_{i\gamma}\wedge\omega_{\gamma 0}(X,Y).
\end{align*}
Next, we easily see $d\omega_{n+1,0}=-\frac{\ep\ep_{n+1}}{ac}dT_{n+1}\wedge\eta$. Therefore,
\begin{align*}
&d\omega_{n+1,0}(X,Y)=-\frac{\ep\ep_{n+1}}{ac}dT_{n+1}\wedge\eta(X,Y) 
= -\frac{\ep\ep_{n+1}}{ac}dT_{n+1}\Big( X(T_{n+1})\eta(Y)-Y(T_{n+1})\eta(X)\Big)\\
&
=\ep_{n+1} \sum_k \ep_k\big(\produ{AX,e_k}\produ{SY,e_k}-\produ{AY,e_k}\produ{SX,e_k}\big)\\
&=-\sum_k \omega_{n+1\,k}\wedge\omega_{k 0}(X,Y)
=-\sum_{\gamma} \omega_{n+1\,\gamma}\wedge\omega_{\gamma 0}(X,Y). \qedhere
\end{align*}
\end{proof}

\begin{lemma}
\begin{eqnarray*}
d\X_{\alpha\beta}&=&-\ep_{\alpha}\ep_{\beta}d\X_{\beta\alpha}, \quad 
d\X_{u 0}=0, \\
d\X_{ij}&=&
\frac{a''a-2(a')^2}{a^2}\eta\wedge(T_{j}\omega_{i}
-\ep_i\varepsilon_{j}T_{i}\omega_{j})+\frac{\varepsilon a'}{a}\Big(T_jd\omega_i
-\ep_i\ep_jT_id\omega_j\Big)
\\
&&+2\ep\ep_j \left(\frac{a'}{a}\right)^2\omega_j\wedge\omega_i
+\frac{\varepsilon
a'}{a}\sum_{u}T_u\Big(\omega_{uj}\wedge\omega_i-\ep_i\ep_j\omega_{ui}\wedge\omega_j \Big),
\\
d\X_{i\,n+1}&=&
\frac{a''a-2(a')^2}{a^2}T_{n+1}\eta  \wedge\omega_{i}
+\frac{\varepsilon a'}{a}\Big(\sum_kT_k\omega_{k\,n+1}\wedge \omega_{i} +T_{n+1}d\omega_{i}\Big).
\end{eqnarray*}
\end{lemma}
\begin{proof}
Since $\X_{\alpha\beta}=-\ep_{\alpha}\ep_{\beta}\X_{\beta\alpha}$, we trivially have
$d\X_{\alpha\beta}=-\ep_{\alpha}\ep_{\beta}d\X_{\beta\alpha}$.  Next,
\begin{align*}
d\X_{\alpha\beta}&=
d\left(\frac{\varepsilon a'}{a}\left(T_{\beta}\omega_{\alpha}
-\ep_{\alpha}\ep_{\beta}T_{\,\alpha}\omega_{\beta}\right)\right)\\
&=\Big(\sum_k e_k\left(\frac{\ep a'}{a}\right)\omega_k\Big)
\wedge(T_{\,\beta}\omega_{\alpha}-\varepsilon_{\alpha}\varepsilon_{\beta}T_{\,\alpha}\omega_{\beta}
)\\
&+\frac{\varepsilon a'}{a}\Big(dT_{\,\beta}\wedge\omega_{\alpha}
+T_{\,\beta}d\omega_{\alpha}
-\ep_{\alpha}\varepsilon_{\beta}dT_{\,\alpha}\wedge\omega_{\beta}
-\ep_{\alpha}\varepsilon_{\beta}T_{\,\alpha}d\omega_{\beta}\Big) \\
&= \frac{a''a-(a')^2}{a^2}\eta  \wedge(T_{\,\beta}\omega_{\alpha}
-\varepsilon_{\alpha}\varepsilon_{\beta}T_{\,\alpha}\omega_{\beta})
\\
&\quad +\frac{\varepsilon a'}{a}
\Big(dT_{\,\beta}\wedge\omega_{\alpha}
+T_{\,\beta}d\omega_{\alpha}
-\varepsilon_{\alpha}\varepsilon_{\beta}dT_{\,\alpha}\wedge\omega_{\beta}
-\ep_{\alpha}\varepsilon_{\beta}T_{\,\alpha}d\omega_{\beta}\Big).
\end{align*}
For $\beta=0$ and any $u>0$, since $T_{ 0}=0$ and $\omega_{0}=0$, then $d\X_{u 0}=0$. For $\beta=n+1$ and $i<n+1$, since  $\omega_{n+1}=0$, we have 
\begin{align*}
d\X_{i\,n+1}& = \frac{a''a-(a')^2}{a^2}T_{n+1}\eta  \wedge\omega_{i}
+\frac{\varepsilon a'}{a}\Big( dT_{n+1}\wedge\omega_{i}
+T_{n+1}d\omega_{i}\Big).
\end{align*}
By condition (\ref{condB}), and the fact $T=\sum_k\ep_kT_ke_k$, we see
$dT_{n+1}(X)=  -\produ{AX,T}-\ep\frac{a'}{a}T_{n+1}\eta(X)$ $
=-\sum_k \ep_kT_k\produ{e_k,AX}- \ep T_{n+1}\frac{a'}{a}\eta(X)$, 
and consequently 
$dT_{n+1}(X)=\sum_kT_k\omega_{k\,n+1}(X)-\ep T_{n+1}\frac{a'}{a}\eta(X)$.
With this,
\begin{align*}
d\X_{i\,n+1}
& = \frac{a''a-(a')^2}{a^2}T_{n+1}\eta  \wedge\omega_{i}
+\frac{\varepsilon a'}{a}\Big(
\sum_kT_k\omega_{k\,n+1}\wedge \omega_{i} -\ep\frac{a'}{a}T_{n+1}\eta\wedge\omega_i
+T_{n+1}d\omega_{i}\Big) \\
&= \frac{a''a-2(a')^2}{a^2}T_{n+1}\eta  \wedge\omega_{i}
+\frac{\varepsilon a'}{a}\Big(\sum_kT_k\omega_{k\,n+1}\wedge \omega_{i} +T_{n+1}d\omega_{i}\Big).
\end{align*}

Next, for $\alpha=i$ and $\beta=j$, we have
\begin{align*}
d\X_{ij} &= \frac{a''a-(a')^2}{a^2}\eta  \wedge(T_{j}\omega_{i}
-\varepsilon_{i}\varepsilon_{j}T_{i}\omega_{j})
\\
&+\frac{\varepsilon a'}{a} \Big(dT_{j}\wedge\omega_{i} +T_{j}d\omega_{i}
-\varepsilon_{i}\varepsilon_{j}dT_{i}\wedge\omega_{j} -\ep_{i}\varepsilon_{j}T_{i}d\omega_{j}\Big).
\end{align*}
We need the following computation
$dT_k=\sum_{l}e_{l}(\langle T,e_k\rangle)\omega_{l}
=\sum_{l}\big(\langle\nabla_{e_{l}}T,e_k\rangle\omega_{l}+\langle
T,\nabla_{e_{l}}e_k\rangle\big)\omega_{l}$ $=
\sum_l \Big( \produ{\frac{a'}{a}(e_{l}-\ep T_{l}T)+\ep_{n+1}T_{n+1}Ae_l,e_k }
+\sum_{j} \omega_{jk}(e_l)T_j \Big) \omega_l$, 
which implies 
\begin{equation}  \label{dT_k}
dT_k=\frac{a'}{a}\ep_k\omega_k -\ep\frac{a'}{a}T_k\eta
+\sum_{u}T_u\omega_{uk}. 
\end{equation}
In this way, a straight forward computation yields
\begin{align*}
d\X_{ij}
&=\frac{a''a-2(a')^2}{a^2}\eta\wedge(T_{j}\omega_{i}-\ep_i\varepsilon_{j}T_{i}\omega_{j})
+\frac{\varepsilon a'}{a}\Big(T_jd\omega_i -\ep_i\ep_jT_id\omega_j\Big) \\
&\quad
+2\frac{\ep\ep_j a'}{a}\omega_j\wedge\omega_i +\frac{\varepsilon 
a'}{a}\Bigg(\sum_{u}T_u(\omega_{uj}\wedge\omega_i-\ep_i\ep_j\omega_{ui}\wedge\omega_j)\Bigg).\qedhere
\end{align*}
\end{proof}

\begin{lemma}
\[(\X\wedge \X)_{\alpha\beta}=
\left(\frac{a'}{a}\right)^2
\Big(
\big(T_{\beta}\omega_{\alpha}
-\varepsilon_{\alpha}\varepsilon_{\beta}T_{\alpha}\omega_{\beta}
\big)\wedge\eta
-\ep\varepsilon_{\beta}\omega_{\alpha} \wedge\omega_{\beta}
\Big).\]
\end{lemma}
\begin{proof} We recall (\ref{condA}). Also, given $X\in TM$, we have $\sum_{\gamma}T_{\gamma}\omega_{\gamma}(X)=\sum_{k}T_{k}\omega_{k}(X)$ $=\eta(X)$. Then,
\begin{align*}
(\X\wedge \X)_{\alpha\beta}=&
\sum_{\gamma}\Big(\frac{\varepsilon a'}{a}
\left(T_{\gamma}\omega_{\alpha}
-\ep_{\alpha}\varepsilon_{\gamma}T_{\alpha}\omega_{\gamma}\right)
\wedge
\frac{\varepsilon a'}{a}\left(T_{\beta}\omega_{\gamma} -
\varepsilon_{\gamma}\varepsilon_{\beta}T_{\gamma}\omega_{\beta}\right)\Big)
\\
&=\left(\frac{a'}{a}\right)^2
\sum_{\gamma}\Big(T_{\gamma}T_{\beta}\omega_{\alpha}
\wedge\omega_{\gamma}-\varepsilon_{\beta}
\varepsilon_{\gamma}T_{\gamma}^2\omega_{\alpha} \wedge\omega_{\beta}
\\
&\quad
-\ep_{\alpha}\varepsilon_{\gamma}T_{\alpha}T_{\beta}\omega_{\gamma}\wedge\omega_{\gamma}
+\varepsilon_{\gamma}\varepsilon_{\gamma}\varepsilon_{\alpha}\varepsilon_{\beta}T_{\alpha}T_{\gamma}
\omega_{\gamma}\wedge\omega_{\beta}\Big)
\\
&= \left(\frac{a'}{a}\right)^2
\Big(
\big(T_{\beta}\omega_{\alpha}
-\varepsilon_{\alpha}\varepsilon_{\beta}T_{\alpha}\omega_{\beta}
\big)\wedge\eta
-\ep\varepsilon_{\beta}\omega_{\alpha} \wedge\omega_{\beta}
\Big).\qedhere
\end{align*}
\end{proof}
Now, we put $\Phi=\Omega\wedge\X+\X\wedge\Omega$.
\begin{lemma}
\begin{eqnarray}
\Phi_{\alpha\beta} &=&-\ep_{\alpha}\ep_{\beta}\Phi_{\beta\alpha}, \quad \Phi_{u 0}=0, 
\\
\Phi_{ij}&=&\frac{\varepsilon a'}{a}\Big(\ep_{i}\ep_{j }T_id\omega_{j } -T_jd\omega_{i }
+\sum_{u}T_{u}\big(\omega_{i }\wedge\omega_{u j}
-\varepsilon_{j }\varepsilon_u\omega_{i u}\wedge\omega_{j}\big)\Big) \\
\Phi_{i\,n+1} &=& \frac{\ep
a'}{a}\Big(-T_{n+1}d\omega_i+\sum_{k}T_k\omega_i\wedge\omega_{k\,n+1}\Big).
\end{eqnarray}
\end{lemma}
\begin{proof} By construction, $\X_{\alpha\beta}=-\ep_{\alpha}\ep_{\beta}\X_{\beta\alpha}$. This
shows $\Phi_{\alpha\beta}=-\ep_{\alpha}\ep_{\beta}\Phi_{\beta\alpha}$. In general, we get
\begin{align*}
\Phi_{\alpha\beta}&=\sum_{\gamma}\omega_{\alpha\gamma}\wedge\left(\frac{\varepsilon a'}{a}
\left(T_{\beta}\omega_{\gamma}-\varepsilon_{\beta}\varepsilon_{\gamma}T_{\gamma}\omega_{\beta}
\right)\right)
+\sum_{\gamma}\frac{\varepsilon a'}{a}
\left(T_{\gamma}\omega_{\alpha}
-\varepsilon_{\gamma}\varepsilon_{\alpha}T_{\alpha}\omega_{\gamma}\right)\wedge\omega_{\gamma\beta}
\\
&=
\frac{\varepsilon a'}{a}\sum_{\gamma}
\left(T_{\beta}\omega_{\alpha\gamma}\wedge\omega_{\gamma}
-\varepsilon_{\beta}\varepsilon_{\gamma}T_{\gamma}\omega_{\alpha\gamma}\wedge\omega_{\beta}
+T_{\gamma}\omega_{\alpha}\wedge\omega_{\gamma\beta}
-\varepsilon_{\alpha}\varepsilon_{\gamma}T_{\alpha}\omega_{\gamma}\wedge\omega_{\gamma\beta}
\right).
\end{align*}
By Lemma \ref{Cartan-first},  
\begin{align*}\Phi_{\alpha\beta}&
=\frac{\varepsilon a'}{a}
\Big(\ep_{\alpha}\ep_{\beta}T_{\alpha}d\omega_{\beta} -T_{\beta}d\omega_{\alpha}
 +\sum_{\gamma}\big(T_{\gamma}\omega_{\alpha}\wedge\omega_{\gamma\beta}
-\varepsilon_{\beta}\ep_{\gamma}T_{\gamma}\omega_{\alpha\gamma}\wedge\omega_{
\beta}\big)\Big).
\end{align*}
For the case $\alpha=i$, $\beta=n+1$, the result is immediate due to the fact $\omega_{n+1}=0$.
For the case $\alpha=i$, $\beta=0$, since $\omega_{0}=0$ and $T_{0}=0$, we begin by writing down
$\Phi_{i 0}=\frac{\varepsilon a'}{a}
\sum_{\gamma} T_{\gamma}\omega_{i}\wedge\omega_{\gamma 0}.$ However, by Condition (\ref{condB}), 
$\sum_{\gamma}T_{\gamma}\omega_{\gamma 0}(X) = \sum_iT_i\omega_{i 0}(X)+T_{n+1}\omega_{n+1,0}(X)
= \sum_i \frac{T_i}{ac}\big(\ep_i\produ{e_i,X}-\ep\ep_i T_i\eta(X)\big) -\ep\ep_{n+1}
\frac{T_{n+1}^2}{ac}\eta(X) = \frac{1}{ac}\big( \sum_i \ep_i\produ{e_i,T}\produ{e_i,X}-\ep \big(\sum_i\ep_i
T_i^2\big)\eta(X)-\ep\ep_{n+1} T_{n+1}^2\eta(X)\big)$,  and hence
\begin{equation}\label{(TOmega)n+2}
\sum_{\gamma}T_{\gamma}\omega_{\gamma 0}(X) =0.
\end{equation}
The case $\alpha=n+1$ and $\beta=0$ trivially vanishes due to $\omega_{n+1}=\omega_{0}=0$.
\end{proof}

\begin{lemma} \label{bigsum} $d\Upsilon+\Upsilon\wedge\Upsilon=0$.
\end{lemma}
\begin{proof}
This is equivalent to prove $d\X-d\Omega-\Omega\wedge \Omega
-X\wedge X+X\wedge \Omega+\Omega\wedge X=0.$ The case $\alpha=u$, $\beta=0$ is trivial. For the case $\alpha=i$ and $\beta=j$, we have
\begin{align*}
&(d\X-d\Omega-\Omega\wedge \Omega
-X\wedge X+X\wedge \Omega+\Omega\wedge X)_{ij}
\\
&=\frac{a''a-2(a')^2}{a^2}\eta\wedge(T_{j}\omega_{i}-\ep_i\varepsilon_{j}T_{i}\omega_{j})
+\frac{\varepsilon a'}{a}\Big(T_jd\omega_i -\ep_i\ep_jT_id\omega_j\Big)
\\
&\quad+
2\frac{\ep\ep_j (a')^2}{a^2}\omega_j\wedge\omega_i
+\frac{\varepsilon a'}{a}\Bigg(\sum_{u}
T_u(\omega_{uj}\wedge\omega_i-\ep_i\ep_j\omega_{ui}\wedge\omega_j)\Bigg)
\\
&\quad
+\ep \frac{(a')^2}{a^2} \ep_j\omega_i\wedge\omega_j
-\left(\frac{(a')^2}{a^2}-\frac{a''}{a}\right)\Big(T_j\omega_i-\ep_i\ep_jT_i\omega_j)\wedge\eta
\\
&\quad
-\left(\frac{a'}{a}\right)^2\Big(\big(T_{j}\omega_{i}
-\varepsilon_{i}\varepsilon_{j}T_{i}\omega_{j}
\big)\wedge\eta
-\ep\varepsilon_{j}\omega_{i} \wedge\omega_{j}\Big)
\\
&\quad
+\frac{\varepsilon a'}{a}\Big(\ep_{i}\ep_{j }T_id\omega_{j } -T_jd\omega_{i }
+\sum_{u}T_{u}\big(\omega_{i }\wedge\omega_{u j}
-\varepsilon_{j }\varepsilon_u\omega_{i u}\wedge\omega_{j}\big)\Big)
=0.
\end{align*}
Next, for $\alpha=i$, $\beta=n+1$, we compute
\begin{align*}
&(d\X-d\Omega-\Omega\wedge \Omega
-X\wedge X+X\wedge \Omega+\Omega\wedge X)_{i\,n+1}\\
&= \frac{a''a-2(a')^2}{a^2}T_{n+1}\eta  \wedge\omega_{i}
+\frac{\varepsilon a'}{a}\Big(\sum_kT_k\omega_{k\,n+1}\wedge \omega_{i} +T_{n+1}d\omega_{i}\Big) \\
&\quad -T_{n+1} \left(\frac{a''}{a} -\frac{(a')^2}{a^2}\right)\eta\wedge\omega_i- \left(\frac{a'}{a}\right)^2
\Big(
\big(T_{n+1}\omega_{i}
-\varepsilon_{i}\varepsilon_{n+1}T_{i}\omega_{n+1}
\big)\wedge\eta
-\ep\varepsilon_{n+1}\omega_{i} \wedge\omega_{n+1}
\Big) \\
&\quad
+\frac{\ep a'}{a}\Big(-T_{n+1}d\omega_i+\sum_{k}T_k\omega_i\wedge\omega_{k\,n+1}\Big)
\\
&=
\frac{a''a-2(a')^2}{a^2}T_{n+1}\eta  \wedge\omega_{i}
 -T_{n+1} \left(\frac{a''}{a} -\frac{(a')^2}{a^2}\right)\eta\wedge\omega_i  
- \left(\frac{a'}{a}\right)^2\big(T_{n+1}\omega_{i}\big)\wedge\eta =0.\qedhere
 \end{align*}
\end{proof}

It is clear that the map 
\[ s:\S\rightarrow \mathbb{S}(\mathbb{E}^{n+2})=\{X\in\mathbb{E}^{n+2}|\langle X,X\rangle\}=
\varepsilon_{n+1}\}, \quad
Z\mapsto (Z_{n+1,0},\ldots,Z_{n+1\,n+1})^t,
\]
is a submersion. Given a point $x\in M$, we define the set
\[\mathcal{Z}(x)=\{Z\in\S|Z_{n+1\beta}=T_{\beta}(x), \ \beta=0,\ldots,n+1\}.\]
Now, we prove the following
\begin{lemma} \label{prop_map1} Let $(M,\langle,\rangle)$ be a semi-Riemannian manifold satisfying
the structure conditions. For each $x_0\in M$ and $B_0\in \mathcal{Z}(x_0)$, there exists a
neighborhood $\mathcal{U}$ of $x_0$ in $M$ and a unique map $B: \mathcal{U}\rightarrow \S$, such
that
\begin{eqnarray*}
B^{-1}dB=\Omega-\mathbf{X}, \quad \textrm{ for all } x\in \mathcal{U},\quad B(x)\in \mathcal
Z(x),\quad
B_0=B(x_0).
\end{eqnarray*}
\end{lemma}
\begin{proof}
Given  $\mathcal{U}$ be an open neighborhood of $x_0\in M$, we define the set
$$\mathcal{F}=\{(x,Z)\in \mathcal{U}\times\S|Z\in \mathcal{Z}(x)\}.$$
Since the map $s$ is submersion, $\mathcal F$ is a submanifold of $M\times \S$ with
$$\dim \mathcal{F}=n+\frac{(n+1)(n+2)}{2}-(n+1)=\frac{n(n+1)}{2}+n.$$
Moreover, given $(x,Z)\in\mathcal{F}$,
$$T_{(x,Z)}\mathcal{F}=\{(U,V)\in T_x\mathcal{U}\oplus T_Z\S| V_{n+1\beta}=(dT_{\beta})_x(U), \
\beta=0,\ldots,n+1\}.$$
We consider on $\mathcal{F}$ the distribution $\mathfrak{D}(x,Z)=\ker\Theta_{(x,Z)}$, where
$\Theta=\Upsilon-Z^{-1}dZ=\Omega-\mathbf{X}-Z^{-1}dZ.$ 
In other words, given $(U,V)\in T_{(x,Z)}\mathcal{F}$, we have
$\Theta_{(x,Z)}(U,V)=\Omega_x(U)-\mathbf{X}_x(U)-Z^{-1}V.$
Next, we see that $\dim\mathfrak{D}=n$. We consider the space
$\mathcal{H}=\{H\in \mathfrak{s}|(ZH)_{n+1\beta}=0, \ \beta=0,\ldots,n+1\}.$
It is clear that $H\in\mathcal{H}$ if and only if $H\in\ker(ds)_{I_{n+2}}$. But the map $s$ is a
submersion, hence $\dim (\ker ds_{I_{n+2}})=\dim\mathcal{H}=\frac{(n+1)n}{2}$.
We notice that $(Z\Theta)_{n+1\beta}=(Z\Omega)_{n+1\beta}-(Z\X)_{n+1\beta}-(dZ)_{n+1\beta}=(Z\Omega)_{n+1\beta}
-(Z\X)_{n+1\beta} -dT_{\beta}.$ Consequently using equation \eqref{dT_k}  and $T_{0}=0$ we get
\begin{align*}
&(Z\Theta)_{n+1 k}=\sum_{\gamma}Z_{n+1\gamma}\omega_{\gamma k} -\sum_{\gamma}Z_{n+1\gamma}\X_{\gamma k} -\frac{a'}{a}\ep_k\omega_k +\ep\frac{a'}{a}T_k\eta
-\sum_{u}T_u\omega_{uk}\\
&=\sum_{\gamma}T_{\gamma}\omega_{\gamma k} -\sum_{\gamma}T_{\gamma}\frac{\ep
a'}{a}\big(T_{k}\omega_{\gamma}-\ep_{\gamma}\ep_{k}T_{\gamma}\omega_{k}\big)
-\frac{a'}{a}\ep_k\omega_k +\ep\frac{a'}{a}T_k\eta
-\sum_{u}T_u\omega_{uk}\\
&= -\frac{\ep
a'}{a}\big(T_{k}\eta-\sum_{\gamma}\ep_{\gamma}\ep_{k}T_{\gamma}T_{\gamma}\omega_{k}\big)
-\frac{a'}{a}\ep_k\omega_k +\ep\frac{a'}{a}T_k\eta
=\frac{\ep
a'}{a}\sum_{\gamma}\ep_{\gamma}\ep_{k}T_{\gamma}T_{\gamma}\omega_{k}-\frac{a'}{a}\ep_k\omega_k=0,
\end{align*}
since $\varepsilon=\langle T,T\rangle+\varepsilon_{n+1} T^2_{n+1}$. Similarly, 
\begin{align*}
&(Z\Theta)_{n+1\, n+1}=
\sum_{\gamma}T_{\gamma}\omega_{\gamma\,n+1}-\sum_{\gamma}T_{\gamma}\frac{\ep
a'}{a}\big(T_{n+1}\omega_{\gamma}-\ep_{\gamma}\ep_{n+1}T_{\gamma}\omega_{n+1}\big) -dT_{n+1}\\
&=\sum_{\gamma}T_{\gamma}\omega_{\gamma\,n+1}
-\frac{\ep a'}{a}T_{n+1}\eta -\sum_kT_k\omega_{k\,n+1}+\ep T_{n+1}\frac{a'}{a}\eta=0,
\end{align*}
and with equation \eqref{(TOmega)n+2}, it is clear 
\begin{align*}
(Z\Theta)_{n+1,0}&=
\sum_{\gamma}T_{\gamma}\omega_{\gamma 0}-\sum_{\gamma}T_{\gamma}\frac{\ep
a'}{a}\big(T_{0}\omega_{\gamma}-\ep_{\gamma}\ep_{0}T_{\gamma}\omega_{0}\big) -dT_{0}=0.
\end{align*}
Hence,
$\textrm{Im}(\Theta)\subset \mathcal{H}=\ker(ds)_{I_{n+2}}.$ Now, given the space
$\{(0,ZH)|H\in\mathcal{H}\}\subset T_{(x,Z)}\mathcal{F}$, we have
$\Theta_{(x,Z)}(0,ZH)=-Z^{-1}(ZH)=-H,$ which means that $\Theta_{(x,Z)}$ is a submersion unto $\mathcal{H}$, and  $\textrm{Im}(\Theta)= \mathcal{H}$. Now, we get $\dim \mathfrak{D}(x,Z)=\dim\ker\Theta_{(x,Z)}=\dim T_{(x,Z)}\mathcal{F}-\dim \textrm{Im}\Theta_{(x,Z)}=n.$

Next, we prove that $\mathfrak{D}$ is integrable. On one hand, since $\mathfrak{D}=\ker\Theta$ and
$d\Upsilon+\Upsilon\wedge \Upsilon=0$, we compute
$d\Theta =d\Upsilon+Z^{-1}dZ\wedge Z^{-1}dZ = d\Upsilon+(\Upsilon-\Theta)\wedge(\Upsilon-\Theta)
=-\Upsilon\wedge \Theta-\Theta\wedge\Upsilon+\Theta\wedge\Theta$. Therefore, given
$U,V\in\mathfrak{D}$, $d\Theta(U,V)=U(\Theta(V))-V(\Theta(U))-\Theta([U,V]) = -\Theta([U,V]) =
(-\Upsilon\wedge \Theta-\Theta\wedge\Upsilon+\Theta\wedge\Theta)(U,V) = 0$, which implies
$[U,V]\in\mathfrak{D}$.

Next, let $\mathcal{L}\subset\mathcal{F}$ be an integral manifold through $(x_0,B_0)$.  For each
$(0,V)\in\mathfrak{D}_{(x_0,B_0)}=T_{B_0}\mathcal{L}$, we have $\Theta_{(x_0,B_0)}(V)=B_0^{-1}V=0$,
and hence $V=0$, since $B_0\in \mathbf{S}$. This implies
$\mathfrak{D}_{(x_0,B_0)}\cap [\{0\}\times T_{B_0}\S]=\{0\}.$ 
In particular, by shrinking $\mathcal{U}$ if necessary, $\mathcal{L}$ is the graph of a unique map
$B:\mathcal{U}\rightarrow \mathbf{S}$. Also, since $\mathcal{L}\subset\mathcal{F}$, then for each
$x\in \mathcal{U}$, $B(x)\in\mathcal{Z}(x)$.
Finally since $\Theta\equiv 0$ on $\mathcal{L}$, $B$ satisfies by definition
$B^{-1}dB=\Omega-\mathbf{X}$.
\end{proof}

Define now the map $\chi:\mathcal{U}\rightarrow\mathbb{R}^{n+2}$ by
\[ \chi_{0}=\varepsilon_{0}B_{0 0},\quad \chi_{i}=\varepsilon_iB_{i0},\quad  \chi_{n+1}=\pi.
\]
Notice that, since $B(x)\in \mathcal{Z}(x)\subset\mathbf{S}$, then $B_{n+1, 0}=T_{0}=0$, whic implies 
$\ep_0 \chi_{0}^2+\sum_{i=1}^{n}\ep_i\chi_i^2=\sum_{\alpha=0}^{n} \ep_{\alpha} B_{\alpha
0}^2=\ep_0=c,$ thus obtaining that $(\chi_0,\dots,\chi_{n})$ lies in $\mathbb{M}^n_k(c)$, which means
$\textrm{Im}(\chi)\subset\varepsilon I\times_{a}\mathbb{M}^n_k(c)$. 
Now, we have by definition $dB=B\Omega-B\X$. Hence
\begin{align*}
d\chi_i(e_k)&=\varepsilon_idB_{i0}(e_k) =\sum_{\alpha=0}^{n+1}\varepsilon_i(B_{i\alpha}\omega_{\alpha 0}(e_k)-B_{i\alpha}\X_{\alpha 0}(e_k))\\
&=\varepsilon_i\sum_{\alpha=1}^{n+1}B_{i\alpha}\omega_{\alpha
0}(e_k)-\varepsilon_i\sum_{\alpha=1}^{n+1}B_{i\alpha}\Big(\frac{\varepsilon
a'}{a}(T_{0}\omega_{\alpha}(e_k)-\varepsilon_{0}\varepsilon_{\alpha}T_{\alpha}\omega_{0}(e_k)\Big)\\
&=\varepsilon_i\sum_{j=1}^nB_{ij}\omega_{j 0}(e_k)+B_{in+1}\omega_{n+1 0}(e_k)\\
&=\varepsilon_i\sum_{j=1}^nB_{ij}\frac{1}{ac}(\varepsilon_j\langle
e_j,e_k\rangle-\varepsilon\varepsilon_jT_j\eta(e_k))-
B_{in+1}\varepsilon\varepsilon_{n+1}\frac{1}{ac}T_{n+1}\eta(e_k)\\
&=\varepsilon_i\frac{1}{ac}B_{ik}-\frac{\varepsilon\varepsilon_{n+1}}{ac}B_{in+1}T_{n+1}T_k=\frac{
\varepsilon_i}{ac}B_{ik}
\end{align*}
A similar computation yields $d\chi_{0}(e_k)=\varepsilon_{0}\frac{1}{ac}B_{0k}$
and $d\chi_{n+1}(e_k)=\varepsilon\eta(e_k)=\varepsilon T_k=\varepsilon B_{n+1k}$.  Hence, we have
that $d\chi=CB\varpi$,
with 
\[C= \begin{pmatrix}
  \varepsilon_0/ca & 0 & \cdots &0 \\
0  & \ddots  & \ddots & \vdots  \\
\vdots & \ddots &  \varepsilon_{n}/ca & 0\\
  0 & \cdots &  0 & \varepsilon_{n+1}
 \end{pmatrix}, \quad \varpi=(0,\omega_1,\cdots\omega_n,0)^T,\]
or equivalently, $d\chi(e_k)_{\alpha}=(CB)_{\alpha k}$, meaning that in the frame
$\frac{\partial}{\partial x_{\alpha}}$ the vector $d\chi(e_k)$ is given by the k-th column of the
matrix $CB$ and in the frame $\bar{E}_{\alpha}$ by the k-th column of the matrix $B$. In other
words, $d\chi (e_k)=\sum_{\alpha}\varepsilon_{\alpha}B_{\alpha k}\bar{E}_{\alpha}$. $C$ is an
invertible matrix as well as $B$. Consequently, $d\chi$ has rank n and it is an immersion.
Moreover, for any $i,j$,  since $B\in\mathbf{S}$, we have
\[
\langle d\chi(e_i),d\chi(e_j)\rangle=
\produ{ \sum_{\alpha}\varepsilon_{\alpha}B_{\alpha i}\bar{E}_{\alpha} ,
\sum_{\gamma}\varepsilon_{\gamma}B_{\gamma j}\bar{E}_{\gamma} } 
=  \sum_{\gamma} \ep_{\alpha} B_{\alpha i}B_{\alpha j} = \ep_i \delta_{ij},
\]
Hence, $\chi$ is isometric. Moreover, along $\chi$, we obtain
\[
\frac{\partial}{\partial t}=\sum_{i=1}^n\ep_i\langle \frac{\partial}{\partial t},
d\chi(e_i)\rangle d\chi(e_i)
+\ep_{n+1}\langle\frac{\partial}{\partial t},e_{n+1}\rangle e_{n+1}
=T+\varepsilon_{n+1}T_{n+1}e_{n+1} .
\]
Next, we would like to compute the shape operator of the immersion. Recall
$\bar{E}_{n+1}=\partial_t$. We show that the shape operator of the immersion is exactly what we need, namely $d\chi\circ A\circ d\chi^{-1}$. Indeed, 
\begin{align*}
&\langle \bar{\nabla}_{d\chi_{(e_i)}}d\chi_{(e_j)},e_{n+1}\rangle 
=\langle \sum_{\alpha\beta}\bar{\nabla}_{\varepsilon_{\alpha}B_{\alpha
i}\bar{E}_{\alpha}}\varepsilon_{\beta}B_{\beta j}\bar{E}_{\beta},e_{n+1}\rangle=\langle
\sum_{\alpha\beta}\varepsilon_{\alpha}\varepsilon_{\beta}\widetilde{\nabla}_{B_{\alpha
i}\bar{E}_{\alpha}}B_{\beta j}\bar{E}_{\beta},e_{n+1}\rangle
\\
&= \sum_{\alpha\beta\gamma}\big[\varepsilon_{\alpha}\varepsilon_{\beta}\varepsilon_{\gamma}B_{\alpha
i}B_{\beta
j}B_{\gamma\,n+1}\langle\widetilde{\nabla}_{\bar{E}_{\alpha}}\bar{E}_{\beta},\bar{E}_{\gamma}\rangle
+
\varepsilon_{\beta}dB_{\beta j}(e_i)B_{\beta n+1}\big]
+\sum_{\beta}\varepsilon_{\beta}dB_{\beta j}(e_i)B_{\beta n+1}
\\
&=\sum_{uv\gamma}\big[\varepsilon_u\varepsilon_v\varepsilon_{\gamma}B_{u i}B_{v j}
B_{\gamma\,n+1}\langle-\frac{\varepsilon_u\delta_{uv}\varepsilon
a'}{a}\partial_t,\bar{E}_{\gamma}\rangle +\varepsilon_u\varepsilon_{n+1}\varepsilon_{\gamma}B_{u
i}B_{n+1 j}B_{\gamma\,n+1}\langle\frac{a'}{a}\bar{E}_u,\bar{E}_{\gamma}\rangle\big]
\\
&\quad 
+\sum_{\beta}\varepsilon_{\beta}dB_{\beta j}(e_i)B_{\beta n+1}
\\
&=-\frac{ \varepsilon a'}{a}B_{n+1 n+1}\sum_{u}\varepsilon_u B_{u i}B_{u
j}+\frac{\varepsilon_{n+1}a'}{a}B_{n+1 j} \sum_u\varepsilon_uB_{u i}B_{u n+1}
+\varepsilon_{n+1}(B^{-1}dB)_{n+1 j}(e_i)
\\&=
-\frac{\varepsilon a'}{a}B_{n+1 n+1}([\varepsilon_i\delta_{ij}-\varepsilon_{n+1}B_{n+1i}B_{n+1 j}]
\\
&\quad +\frac{\varepsilon_{n+1}a'}{a}B_{n+1 j}[\varepsilon_i\delta_{i
n+1}-\varepsilon_{n+1}B_{n+1i}B_{n+1n+1}]
+\varepsilon_{n+1}(B^{-1}dB)_{n+1 j}(e_i)\\
&=\varepsilon_{n+1}\left[(B^{-1}dB)_{n+1 j}(d\chi_{(e_i)})-\frac{\varepsilon
a}{a'}[\varepsilon_{n+1}\varepsilon_jT_{n+1}\omega_j(e_i)-T_j\omega_{n+1}(e_i)]\right]\\
&= \varepsilon_{n+1}[(B^{-1}dB)_{n+1j}+X_{n+1 j}]=\varepsilon_{n+1}\omega_{n+1 j}(e_i)
=\langle e_j, Ae_i\rangle.
\end{align*}
Finally, the uniqueness of the local immersion follows from the uniqueness of the map $B$ in Lemma
\ref{prop_map1}.
\end{proof}

\begin{corollary}\begin{enumerate}
\item If the hypersurface $M$ satisfies $\eta=0$, then, $M$ is a slice of $\mmm$.
\item If $\eta\neq 0$ everywhere, then $M$ is admits a foliation of codimension 1.
\end{enumerate}
\end{corollary}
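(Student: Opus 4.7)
The plan is to exploit the relation $T=\ep\,\grad(\pi)$ built into the structure conditions, which gives $\eta = \produ{\cdot,T} = \ep\, d\pi$ as a $1$-form on $M$. With this identification both parts reduce to standard facts about the level sets of the function $\pi$.

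For part (1), I would argue that $\eta \equiv 0$ forces $d\pi \equiv 0$, so $\pi$ is locally constant, say $\pi \equiv t_0$ on each component of $M$. Theorem \ref{main}(2) then yields $\pi_I \circ \chi \equiv t_0$, so the local isometric immersion $\chi:\mathcal{U}\to\mmm$ takes its values in the slice $\{t_0\}\times\mm$. Since $\dim M = n$ equals the dimension of the slice and $\chi$ is an immersion, $\chi(\mathcal{U})$ is an open subset of $\{t_0\}\times\mm$, so locally $M$ is a slice of $\mmm$.

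For part (2), I would observe that $\eta$ nowhere vanishes, hence $d\pi$ nowhere vanishes, so $\pi:M\to I$ is a submersion onto an open subinterval. Its regular fibers $\pi^{-1}(t)$ are therefore submanifolds of codimension one and partition $M$ into the leaves of a foliation. Equivalently, Lemma \ref{deta} gives $d\eta=0$, so the Frobenius theorem applied to the nowhere-zero closed $1$-form $\eta$ produces the codimension-one integrable distribution $\ker\eta$, whose integral manifolds are the same leaves.

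There is no real technical obstacle here: once one notices that $\eta$ is (up to the sign $\ep$) the differential of the smooth function $\pi$ and invokes Theorem \ref{main}, both statements follow from the submersion theorem (or Frobenius) together with the dimension count that turns the local immersion of part (1) into an open piece of a slice.
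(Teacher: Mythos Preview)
Your proposal is correct and matches the paper's proof almost exactly. For part (2) you give precisely the paper's Frobenius argument (Lemma \ref{deta} plus integrability of $\ker\eta$), supplemented by the equivalent submersion viewpoint; for part (1) the only difference is that the paper invokes item \ref{dt} of Theorem \ref{main} (so $\eta=0\Rightarrow T=0\Rightarrow \partial_t$ is normal to $M$), whereas you invoke item (2) ($\pi_I\circ\chi=\pi$ constant), which is an equally immediate route to the same conclusion.
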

\begin{proof} Item 1 is an immediate consequence of item \ref{dt} of Theorem \ref{main}. For item 2,
by Lemma \ref{deta}, we know $d\eta =0$.  This implies that $\ker\eta$ is integrable. Indeed, given
$X,Y\in\ker\eta$, $0=d\eta(X,Y)=X(\eta(Y))-Y(\eta(X))-\eta([X,Y])=-\eta([X,Y])$, which shows
$[X,Y]\in\ker\eta$. In other words, it has to admit a foliation whose leaves are of codimension 1 in
$M$. In fact, $T$ is a normal vector field to the leaves. 
\end{proof}

\noindent \textbf{Remark:} Under the same assumption on $(M,\langle,\rangle,\nabla, R)$,  we can find another equivalent formulation for Theorem \ref{main}. In fact, consider again a $\langle ,\rangle$-self adjoint $(1,1)$-tensor $A$ on $M$, a nowhere vanishing vector field $T\in\mathfrak{X}(M)$ and its associated 1-form $\eta(X)=\produ{X,T}$ for any $X\in TM$. We also assume the existence of smooth functions $\rho,
\tilde\rho, \bar\rho, T_{n+1}:M\rightarrow \R$. Let the following conditions be satisfied:
\begin{enumerate}[(a)]
\item $\rho>0$, $d\rho = \ep \tilde\rho\, \eta$, $d\tilde\rho =\ep \bar\rho\, \eta$;
\item $\ep = \produ{T,T}+\ep_{n+1}T_{n+1}^2$;
\item $\nabla_XT=\frac{\tilde\rho}{\rho}(X-\ep\eta(X)T)+\ep_{n+1}T_{n+1}AX$, for any $X\in TM$;
\item  $X(T_{n+1})=-\produ{AT,X}-\ep\frac{\tilde\rho}{\rho} T_{n+1}\eta(X)$, for any $X\in TM$;
\item (Codazzi equation). For any $X,Y,Z,W\in TM$ \label{eq-codazziMx}
\[(\nabla_XA) Y - (\nabla_YA) X =
T_{n+1}  \left(\frac{\bar\rho}{\rho}
-\left(\frac{\tilde\rho}{\rho}\right)^2+\frac{\ep\ep_{0}}{\rho^2}\right) \Big(\eta(Y)X
-\eta(X)Y\Big).\]
\item (Gau\ss\, equation). \label{eq-GaussMx} For any $X,Y,Z,W\in TM$
\begin{align*}
&R(X,Y,Z,W)=
 \left( \ep\left(\frac{\tilde\rho}{\rho}\right)^2-\frac{\ep_{0}}{\rho^2}  \right)
 \Big( \produ{X,Z}\produ{Y,W}-\produ{Y,Z}\produ{X,W}\Big) \nonumber \\
 &\quad  +\left(\frac{\bar\rho}{\rho}
-\left(\frac{\tilde\rho}{\rho}\right)^2+\frac{\ep\ep_{0}}{\rho^2}\right)
 \Big(\produ{X,Z}\eta(Y)\eta(W) -\produ{Y,Z}\eta(X)\eta(W)
\\ &\quad
- \produ{X,W}\eta(Y)\eta(Z)+ \produ{Y,W}\eta(X)\eta(Z)\Big)
 +\ep_{n+1}\Big( \produ{AY,Z}\produ{AX,W}-\produ{AY,W}\produ{AX,Z}\Big).
\end{align*}

\end{enumerate}
Then Theorem \ref{main} can be reformulated in the following way:
\begin{theorem}  Let $(M,\produ{,})$ a simply connected semi-Riemannian manifold satisfying the
previous conditions. Then, there exists smooth functions $\pi:M\rightarrow I$, $I$ an interval,
$a:I\subset\R\rightarrow\R^{+}$,   a metric immersion $\chi:(M,\produ{,})\rightarrow
(\mmm,\produ{,}_1)$ and a normal unit vector field $e_{n+1}$ along $\chi$ such that:
\begin{enumerate}
\item $\ep_{n+1}=\produ{e_{n+1},e_{n+1}}_1$;
\item $\pi_I\circ\chi=\pi$, where $\pi_I:\mmm\rightarrow I$ is the projection;
\item $\rho = a\circ \pi$, $\tilde\rho = a'\circ \pi$ and $\bar\rho = a''\circ \pi$; 
\item The shape operator associated with $e_{n+1}$ is $A$;
\item (\ref{eq-codazziMx}) is the Codazzi equation and (\ref{eq-GaussMx}) is the Gau\ss\, equation;
\item   and along $\chi$, 
$\dt = T+\ep_{n+1}T_{n+1}e_{n+1}$ holds.
\end{enumerate}
\end{theorem}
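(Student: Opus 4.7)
The plan is to reduce the reformulated statement to Theorem \ref{main} by manufacturing the potential $\pi$ and the warping function $a$ out of the data $(\rho,\tilde\rho,\bar\rho,\eta)$. Once these are in hand, the six conditions (a)--(f) translate verbatim into the structure conditions of Definition \ref{structureconditions}, and Theorem \ref{main} supplies the immersion.

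First I would check that $\eta$ is closed. Using condition (c) together with the $\produ{,}$-self-adjointness of $A$,
\[ \produ{\nabla_X T,Y} = \tfrac{\tilde\rho}{\rho}(\produ{X,Y}-\ep\eta(X)\eta(Y)) + \ep_{n+1}T_{n+1}\produ{AX,Y},\]
which is symmetric in $X,Y$, so $d\eta(X,Y)=\produ{\nabla_X T,Y}-\produ{\nabla_Y T,X}=0$. Since $M$ is simply connected, the closed form $\eta$ is exact: pick $\psi:M\to\R$ with $d\psi=\eta$, and set $\pi=\ep\psi$, so that $\eta=\ep\,d\pi$ and $T=\ep\,\grad(\pi)$, exactly as demanded by Definition \ref{structureconditions}. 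Because $T$ is nowhere zero, $\pi$ is a submersion and $I:=\pi(M)$ is an open interval.

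Next I would recover $a$ from $\rho$. Condition (a) reads $d\rho=\ep\tilde\rho\,\eta=\tilde\rho\,d\pi$ and $d\tilde\rho=\bar\rho\,d\pi$; hence $\rho$ and $\tilde\rho$ are constant along the integrable codimension-one distribution $\ker d\pi$ and so descend through $\pi$ to give smooth functions $a,a_1:I\to\R$ with $\rho=a\circ\pi$, $\tilde\rho=a_1\circ\pi$; since $\rho>0$ we have $a>0$. Differentiating $\rho=a(\pi)$ yields $a'\circ\pi=\tilde\rho$, hence $a_1=a'$, and repeating the argument gives $a''\circ\pi=\bar\rho$. Substituting $\tilde\rho/\rho=(a'/a)\circ\pi$ and $\bar\rho/\rho=(a''/a)\circ\pi$ converts each hypothesis (b)--(f) into the corresponding item of Definition \ref{structureconditions}. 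Theorem \ref{main} then produces, around any given point of $M$, a metric immersion $\chi$ and unit normal $e_{n+1}$ satisfying items 1, 2, 4, 5, 6 of the present statement, while item 3 is built into the construction.

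The main obstacle is the passage from the infinitesimal identity $d\rho=\tilde\rho\,d\pi$ to the global factorisation $\rho=a\circ\pi$: two points of $M$ sharing the same value of $\pi$ must also share the same value of $\rho$. Locally this is immediate since $\pi$ is a submersion and $\rho$ is annihilated by $\ker d\pi$; globally one has to argue on the codimension-one foliation defined by $\eta$, using $T$ as a nowhere-vanishing transverse vector field and simple connectedness of $M$ to identify the leaf space with $I$ and to glue the locally defined $a$'s into a single smooth function on $I$. The same simple-connectedness hypothesis also underlies the standard monodromy argument needed to globalize the local immersion supplied by Theorem \ref{main}. Everything after those points is bookkeeping, because conditions (b)--(f) were tailored precisely to coincide with the structure conditions.
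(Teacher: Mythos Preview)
Your proposal is correct and follows essentially the same route as the paper: verify $d\eta=0$ from condition (c) and the self-adjointness of $A$, use simple-connectedness to produce the potential $\pi$ with $T=\ep\,\grad\pi$, factor $\rho$ through $\pi$ to recover $a$ (and hence $a'\circ\pi=\tilde\rho$, $a''\circ\pi=\bar\rho$), and then invoke Theorem \ref{main}. You are in fact more careful than the paper about the global well-definedness of $a$ on the leaf space and about the monodromy argument needed to pass from the local immersion furnished by Theorem \ref{main} to a global one on $M$.
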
 

\begin{proof} First of all, from the expression $\nabla_XT =
\frac{\tilde\rho}{\rho}(X-\ep\eta(X)T)+\ep_{n+1}T_{n+1}AX$, for any $X\in TM$,
we are going to check that  the 1-form $\eta$ satisfies $d\eta =0$. 
\begin{align*}
&d\eta(e_i,e_j) = e_i(\eta(e_j))-e_j(\eta(e_i))-\eta(\nabla_{e_i}e_j)+\eta(\nabla_{e_j}e_i) =
\produ{e_j,\nabla_{e_i}T}-\produ{e_i,\nabla_{e_j}T} \\
&=
  \produ{e_j, \frac{\tilde\rho}{\rho}(e_i-\ep\eta(e_i)T)+\ep_{n+1}T_{n+1}Ae_i}
- \produ{e_i,\frac{\tilde\rho}{\rho}(e_j-\ep\eta(e_j)T)+\ep_{n+1}T_{n+1}Ae_j} \\
&=
\frac{\tilde\rho}{\rho}\big(
\produ{e_j,e_i}-\ep\eta(e_i)\eta(e_j)\big)+\ep_{n+1}T_{n+1}\produ{e_j,Ae_i}
-\frac{\tilde\rho}{\rho}\big( \produ{e_i,e_j}-\ep\eta(e_i)\eta(e_j)\big)\\
&\quad -\ep_{n+1}T_{n+1}\produ{e_i,Ae_j} =0.
\end{align*}
Since $M$ is simply connected, we can obtain a new function $\pi:M\rightarrow\R$ such that $\eta =
\ep d\pi$.
This implies that $T=\ep\grad(\pi)$.  Next, we need to obtain function $a$. On one hand, since $M$
is connected, $I=\pi(M)$ is an interval. Moreover, since $T\neq 0$, we see that each value $t\in I$
is a regular value of $\pi$, which means that each level set $\pi^{-1}(t)\subset M$, $t\in I$, is a
hypersurface of $M$. Choose $t\in I$. Given $X\in T\pi^{-1}(t)$, since $\pi$ is constant along its
level subsets, we see $d\rho(X)=\ep\tilde\rho\,\eta(X) = \tilde{\rho}\,d\pi(X) =0$. In other words,
function $\rho$ is constant along the level sets of $\pi$. This allows us to define
$a:I\rightarrow\R^{+}$ as follows. Given $t\in I$, there exists $p\in M$ such that $t=\pi(p)$, so
that $a(t):=\rho(p)$. Clearly, $\rho = a\circ \pi$. In addition, $d\rho = (a'\circ\pi)d\pi =(a'\circ
\pi)\ep\eta = \ep \tilde{\rho}\eta$, and therefore $\tilde{\rho}=a'\circ\pi$. Similarly,
$a''\circ\pi = \bar\rho$. Now, we just need to resort to Theorem \ref{main}.
\end{proof}

\section{An Application to Horizons in RW 4-spacetimes}

We consider now the simply connected Riemannian 3-dimensional space $\mathbb{M}^3(c)$ of constant
sectional curvature $c=\pm 1$. Let $(M^3,\produ{,})$ be a semi-Riemannian manifold of index $0$ or
$1$. For us, a surface $\tilde{M}^2$ is called \textit{marginally trapped} if its mean curvature
vector $\vec{H}$ satisfies $\produ{\vec{H},\vec{H}}=0$. In this way, we are including maximal
surfaces,   MOTS, and mixed cases in our definition.

We put $\ep=-1$, $\ep_0=c$, $\ep_{4}=\pm 1$, and smooth functions $a:I\subset\R\rightarrow\R^{+}$,
$T_{4}:M\rightarrow\R$ and $\pi:M\rightarrow I$. We construct the vector field $T\in\mathfrak{X}(M)$
by $T=- \grad(\pi)$,  with its 1-form $\eta(X)=\produ{X,T}$. Also, consider a  tensor $A$ of type
(1,1) on $M$. We assume the above datas satisfy the structure conditions of Definition \ref{structureconditions} . 
We recall that the Robertson-Walker space-time is the space$(\bar{P}^{4}=I\times
\mathbb{M}^3(c),\produ{,}_1=-dt^2+a^2g_o)$, hence a special case of the warped products considered in this paper. From Theorem \ref{main}, we get immediately the following

\begin{corollary} \label{RW} Let $(M,\produ{,})$ a semi-Riemannian manifold of $\dim M=3$,
satisfying the previous conditions. Then, for each point $p\in M$, there exists a neighborhood
$\mathcal U$ of p on $M$, a metric immersion $\chi:(\mathcal {U},\produ{,})\rightarrow
(\bar{P}^{4},\produ{,}_1)$ and a normal unit vector field $e_{4}$ along $\chi$ such that $\ep_{4}=\produ{e_{4},e_{4}}_1$, $A$ is the shape operator associated to the immersion, $T$ is the projection of $\partial_t$ on $TM$ and $\pi_I\circ\chi=\pi$, where $\pi_I:I\times\mathbb{M}^3(c)\rightarrow I$ is the projection.

In addition, if $T\neq 0$ everywhere,  the family $\{\chi(\mathcal{U}) \cap \pi^{-1}\{t\} : t\in \R
\}$ provides a foliation of $\chi(\mathcal{U})$ by space-like surfaces.
\end{corollary}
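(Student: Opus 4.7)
The plan is to reduce Corollary \ref{RW} to Theorem \ref{main} and then add a short topological argument for the foliation, using that $T$ is a gradient.

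For the first part, I would note that Section 5 is literally an instance of the framework of Definition \ref{structureconditions} with $n=3$, $\ep=-1$, $\ep_0=c$, and $\ep_{n+1}=\ep_4$. Since the structure conditions are assumed to hold, Theorem \ref{main} applies directly at each $p\in M$, yielding the neighborhood $\mathcal U$, the metric immersion $\chi:\mathcal U\to \bar P^4$, the unit normal $e_4$ with $\produ{e_4,e_4}_1=\ep_4$, the identification of $A$ as the shape operator, and $\pi_I\circ\chi=\pi$. Moreover, item \ref{dt} of the theorem gives $\dt = T + \ep_4 T_4 e_4$ along $\chi$, i.e.\ $T$ is exactly the tangential projection of $\dt$; this delivers the first claim of the corollary verbatim.

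For the foliation assertion, the key point is that $T=-\grad(\pi)$ is nowhere zero, so $\pi:\mathcal U\to I$ has no critical points. After shrinking $\mathcal U$, I may assume that $\chi$ is an embedding and that $\pi$ is a submersion, so by the regular value theorem the level sets $L_t:=\pi^{-1}\{t\}\cap\mathcal U$ are $2$-dimensional submanifolds foliating $\mathcal U$. Using $\pi_I\circ\chi=\pi$, their images $\chi(L_t)=\chi(\mathcal U)\cap\pi_I^{-1}\{t\}$ foliate $\chi(\mathcal U)$, which is the family in the statement.

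To see that the leaves are space-like, note that $T\chi(L_t)$ is the image under $d\chi$ of $\ker(d\pi)=\ker\eta\subset TM$, so since $\chi$ is isometric it suffices to show that $\produ{,}$ restricted to $\ker\eta$ is positive definite. If $M$ has index $0$ there is nothing to check. If $M$ has index $1$, then for $\bar P^4$ to be a Lorentzian RW spacetime we must have $\ep_4=+1$, and condition (\ref{condA}) of Definition \ref{structureconditions} reads $-1=\produ{T,T}+T_4^2$, forcing $\produ{T,T}<0$; hence $T$ is timelike in the Lorentzian $(TM,\produ{,})$ and its $\produ{,}$-orthogonal complement $\ker\eta$ is necessarily a $2$-dimensional positive definite subspace. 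The only real obstacle I foresee is this causal-character bookkeeping; once it is in place, the corollary is a direct consequence of Theorem \ref{main} combined with the regular value theorem.
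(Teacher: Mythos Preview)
Your proposal is correct and follows essentially the same approach as the paper, which simply states that Corollary \ref{RW} ``follows immediately'' from Theorem \ref{main} without giving any further argument. You are actually supplying details the paper omits: for the foliation, the paper's route (in the unnumbered corollary right after Theorem \ref{main}) is to invoke $d\eta=0$ and the integrability of $\ker\eta$, whereas you use the regular value theorem for $\pi$; since $\eta=-d\pi$, these produce the same leaves and are interchangeable here. Your causal-character bookkeeping for the space-like claim (forcing $\ep_4=+1$ when $M$ has index $1$, then using condition (\ref{condA}) to get $\produ{T,T}<0$) is correct and is not spelled out in the paper at all.
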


Next, let $L$ be one of the leafs of $\mathcal{U}$. Let $\sigma$ be its second fundamental form in
$\bar{P}^4$. Clearly, $T^{\perp}L=\mathrm{Span}\{\mathbf{T},e_4\}$, where $\mathbf{T}=T/\sqrt{\vert
\produ{T,T}\vert}$. We take $\ep_{T}=sign(\produ{T,T})$. Since the leaves are spacelike and
$\produ{e_4,e_4}=\ep_4=\pm 1$ is constant, $\ep_{T}=\pm 1$ is constant,
with $\ep_4\ep_T=-1$. Then, for any $X,Y\in TL$,
\[ \sigma(X,Y) = \ep_T \produ{\nat_XY,\mathbf{T}}\mathbf{T} +\ep_4 \produ{\nat_XY,e_4}e_4
=\frac{-1}{\produ{T,T}}\produ{Y,\nabla_XT}T +\ep_4\produ{Y,AX}e_4,
\]
\nolinebreak Given a local orthonormal frame $\{u_1,u_2\}$ of $L$, the mean curvature vector $\vec H$ of $L$ is 
\begin{align*}
2\vec{H }&=  \sum_{i} \sigma(u_i,u_i) = 
\sum_i \Big(\frac{-1}{\produ{T,T}}\produ{u_i,\nabla_{u_i}T}T +\ep_4\produ{u_i,Au_i}e_4 \Big)\\
&= \frac{-1}{\produ{T,T}}\sum_i\produ{u_i,\frac{a'}{a}u_i+\ep_4T_4Au_i} T+\ep_4\Big(
\mathrm{trace}(A) - \frac{\produ{AT,T}}{\produ{T,T}}\Big) e_4 \\
&= \frac{-1}{\produ{T,T}}\Big( 2\frac{a'}{a} +\ep_4T_4 \Big(\mathrm{trace}(A)  -
\frac{\produ{AT,T}}{\produ{T,T}}\Big)\Big) T+\ep_4\Big( \mathrm{trace}(A) -
\frac{\produ{AT,T}}{\produ{T,T}}\Big) e_4.
\end{align*}
We put $h=\mathrm{trace}(A)  - \frac{\produ{AT,T}}{\produ{T,T}}$. As a result, we obtain
$
4\produ{\vec{H},\vec{H}}
=  \frac{1}{\produ{T,T}}\Big( 2\frac{a'}{a} +\ep_4T_4 h\Big)^2+\ep_4 h^2.
$
Then $\vec{H}$ is null if, and only if, $-\ep_4 h^2= \frac{1}{\produ{T,T}}\Big( 2\frac{a'}{a} +\ep_4T_4 h\Big)^2 =  \frac{1}{\produ{T,T}}\Big( 4\frac{(a')^2}{a^2} +T_4^2 h^2 + \frac{4\ep_4 a' T_4 h}{a}
 \Big)$, which is equivalent to $h^2-\frac{4a'T_4}{a} h-\frac{4\ep_4(a')^2}{a^2}=0$. Since $\ep_T\ep_4=-1$, we see that $-\ep_4\produ{T,T}>0$, and so, by solving this equation, we
obtain the following
\begin{corollary}\label{horizon} The leaves of $\ker\eta$ are marginally trapped surfaces in
$-I\times_a\mathbb{M}^3(c)$ if, and only if, the immersion $\chi: M\rightarrow
-I\times_a\mathbb{M}^3(c)$ satisfies the following equality:
\[\mathrm{trace}(A)-\frac{\produ{AT,T}}{\produ{T,T}}=2a'T_4 \pm 2\ep_T \left\vert \frac{a'}{a}
\right\vert \sqrt{\vert\produ{T,T}\vert }.\]
\end{corollary}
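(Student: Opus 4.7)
The plan is almost entirely bookkeeping on top of the computation already presented before the statement. The mean curvature vector of a leaf has been decomposed in the normal plane $\operatorname{Span}\{\mathbf{T},e_4\}$, and the scalar identity
\[
4\produ{\vec H,\vec H} \;=\; \tfrac{1}{\produ{T,T}}\bigl(2\tfrac{a'}{a}+\ep_4 T_4 h\bigr)^2 + \ep_4 h^2, \qquad h := \operatorname{trace}(A) - \tfrac{\produ{AT,T}}{\produ{T,T}},
\]
together with its rearrangement into the scalar quadratic
\[
h^2 - \tfrac{4a'T_4}{a}\, h - \tfrac{4\ep_4 (a')^2}{a^2} \;=\; 0
\]
(equivalent to $\produ{\vec H,\vec H}=0$) have already been established. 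So I would simply solve this quadratic in $h$ and simplify the discriminant.

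By the quadratic formula,
\[
h \;=\; \tfrac{2a'T_4}{a} \;\pm\; \tfrac{2|a'|}{a}\sqrt{T_4^2+\ep_4}.
\]
The discriminant $T_4^2+\ep_4$ is recast using algebraic condition (\ref{condA}) of Definition \ref{structureconditions}, specialized to the present setting $\ep=-1$, $n+1=4$: it reads $-1 = \produ{T,T} + \ep_4 T_4^2$, which rearranges to
\[
T_4^2 + \ep_4 \;=\; -\ep_4\,\produ{T,T}.
\]

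Next I would invoke the observation recorded just above the statement: since each leaf is spacelike in a Lorentzian four-manifold and its normal plane is spanned by $\mathbf{T}$ and $e_4$, the two normal directions have opposite causal character, so $\ep_T\ep_4 = -1$. Hence $-\ep_4=\ep_T$, and because $\ep_T = \operatorname{sign}\produ{T,T}$ this gives $-\ep_4\produ{T,T} = \ep_T\produ{T,T} = |\produ{T,T}|$. Substituting back,
\[
h \;=\; \tfrac{2a'T_4}{a} \;\pm\; 2\Bigl|\tfrac{a'}{a}\Bigr|\sqrt{|\produ{T,T}|},
\]
and absorbing the sign factor $\ep_T\in\{-1,+1\}$ into the $\pm$ yields exactly the expression in the corollary. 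Each step in this reduction is a reversible equivalence, so both implications of the biconditional are established simultaneously.

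I do not foresee any real obstacle. The only nontrivial verification is that the discriminant $T_4^2 + \ep_4$ is nonnegative, and this is automatic: it equals $|\produ{T,T}|\ge 0$ by the causal-character argument above. Everything else is a one-step application of the quadratic formula and the identity $\ep = \produ{T,T}+\ep_{n+1}T_{n+1}^2$ that was imposed as structure condition (\ref{condA}).
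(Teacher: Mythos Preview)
Your proposal is correct and follows exactly the route the paper takes: the text immediately preceding the corollary already reduces the marginally-trapped condition to the quadratic $h^2-\tfrac{4a'T_4}{a}h-\tfrac{4\ep_4(a')^2}{a^2}=0$ and records $\ep_T\ep_4=-1$, and the corollary is then obtained ``by solving this equation''---precisely the quadratic-formula step you wrote out, with the discriminant simplified via condition~(\ref{condA}). Note only that the quadratic actually gives $\tfrac{2a'T_4}{a}$ for the first term (the printed $2a'T_4$ in the statement is a typo), and that the factor $\ep_T$ in the displayed formula is redundant given the $\pm$, so your absorption of it is harmless.
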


\section{Examples}

\begin{example}{A graph surface in $-\R^{+}\times_a \mathbb{S}^2$. }
We start by considering the warping function $a:\R^{+}\rightarrow\R^{+}$, $a(t)=t$, and the warped
product $-\R^{+}\times_a\mathbb{S}^2$, with metric $\produ{,}$. Given $h:(0,\pi)\rightarrow \R^{+}$ a smooth function such that $h(u) > \vert h'(u)\vert$ for any
$u\in \R^{+}$, we introduce the map
\begin{align*}
& \chi :M=(0,\pi)\times (-\pi/2,\pi/2)\rightarrow -I\times_a\mathbb{S}^2, \\
& \chi (u,v) = (\cos(u),\sin(u)\cos(v), \sin(u) \sin(v), h(u)),
\end{align*}
Note that $\pi(u,v)=h(u)$. 
We consider the following frame along $\chi $, with some natural identifications: 
\begin{align*}
e_0 = & \frac{1}{h(u)}\big(\cos(u), \cos(v) \sin(u),\sin(u) \sin(v), 0\big),  \\
e_1 \equiv& \chi_{*}e_1= \frac{1}{ \sqrt{ h(u)^2 - (h'(u))^2} } \big(-\sin(u), \cos(u)\cos(v),\cos(u)\sin(v),
h'(u)\big), \\
e_2 \equiv&\chi_*e_2 =  \frac{1}{h(u)} \big(0, -\sin(v), \cos(v), 0\big), \\
e_3 = & \frac{1}{ h(u) \sqrt{ h(u)^2 - (h'(u))^2} } \big(-\sin(u) h'(u) ,\cos(u)\cos(v) h'(u),
\cos(u)\sin(v) h'(u),  h(u) \big), 
\end{align*}
where $e_1,e_2$ are the normalizations of $\chi _{*}\partial_u$ and $\chi _{*}\partial_v$,
respectively, and $e_3$ is a unit vector field of $M$ along $\chi $ on
$-\R^{+}\times_a\mathbb{S}^2$. Also, the matrix $G=(\produ{e_{\alpha},e_{\beta}})=Diag(1,1,1,-1)$. 
The dual 1-forms on $M$ are 
\[ \omega_0 = 0, \quad \omega_1 = \sqrt{ h(u)^2-(h'(u))^2 } \mathrm{d}u, \quad \omega_2 =
h(u)\sin(u)\mathrm{d}v, \quad \omega_3 = 0.
\]
Since $\R^{+}\times \mathbb{S}^2\subset \R^{+}\times\R^3$, we consider the orthonormal basis 
\[ \E_0=\frac{1}{h}(1,0,0,0), \quad \E_1=\frac{1}{h}(0,1,0,0),\quad \E_2=\frac{1}{h}(0,0,1,0),\quad
\E_3=\partial_t.
\]
Now, we can compute the map $B:M\rightarrow\S$, $B = (\produ{\E_{\alpha},e_{\beta}})$, 
\[ B=\left(
\begin{matrix}
\vspace{2mm}\cos(u) & \frac{-h(u)\sin(u)}{\sqrt{ h(u)^2-(h'(u))^2 } } & 0 & \frac{ - \sin(u) h'(u)
}{ \sqrt{ h(u)^2-(h'(u))^2 } } \\ \vspace{2mm}
-\cos(v)\sin(u) & \frac{-\cos(u)\cos(v) h(u)}{\sqrt{ h(u)^2-(h'(u))^2 }} & \sin (v) &
\frac{-\cos(u)\cos(v) h'(u)}{\sqrt{ h(u)^2-(h'(u))^2 }} \\
\vspace{2mm} -\sin(u)\sin(v) & \frac{-\cos(u) h(u)\sin(v) }{\sqrt{ h(u)^2-(h'(u))^2 }} & -\cos(v) &
\frac{-\cos(u) \sin(v) h'(u)}{\sqrt{ h(u)^2-(h'(u))^2 }} \\
0 & \frac{-h'(u)}{\sqrt{ h(u)^2-(h'(u))^2 }} & 0 & \frac{-h(u)}{\sqrt{ h(u)^2-(h'(u))^2 }}
\end{matrix}
\right).
\]
From this, a straightforward computation gives the $\mathfrak{s}$-valued 1-form $\Upsilon = B^{-1}\mathrm{d}B=(\Upsilon_{\alpha\beta})$, 
\begin{gather*}
\Upsilon_{\alpha\alpha}=0,\quad \Upsilon_{01}=-\Upsilon_{10}= \frac{-h(u)\mathrm{d}u}{\sqrt{ h(u)^2-(h'(u))^2 }}, \quad \Upsilon_{02}=-\Upsilon_{20} = -\sin(u)\mathrm{d}v, \\
\Upsilon_{03}=\Upsilon_{30} = \frac{-h'(u)\mathrm{d}u}{\sqrt{ h(u)^2-(h'(u))^2 }}, \quad 
\Upsilon_{12}=-\Upsilon_{21} = \frac{-h(u)\cos(u)\mathrm{d}v}{\sqrt{h(u)^2-(h'(u))^2 }}, \\
\Upsilon_{13}=\Upsilon_{31}= \frac{h(u)h''(u)-(h'(u))^2}{h(u)^2-(h'(u))^2}\mathrm{d}u, \quad 
\Upsilon_{23}=\Upsilon_{32}= \frac{h'(u)\cos(u)\mathrm{d}v}{\sqrt{ h(u)^2-(h'(u))^2 }}. 
\end{gather*}
A lenghty computation shows $\mathrm{d}\Upsilon+\Upsilon\wedge\Upsilon= 0$. Next, the tangent vector $T$ is 
\begin{align*} T&\equiv \chi _{*}T = \produ{\partial_t,e_1}e_1+\produ{\partial_t,e_2}e_2 \\
&=\Big(\frac{\sin(u)h'(u)}{ h(u)^2-(h'(u))^2 }, \frac{ -\cos(u)\cos(v) h'(u) }{ h(u)^2-(h'(u))^2 }, 
\frac{-\cos(u)\sin(v)h'(u) } { h(u)^2-(h'(u))^2 }, \frac{-(h'(u))^2} { h(u)^2-(h'(u))^2 }
\Big).
\end{align*}
Thus, its dual 1-form and the associated functions $T_{\alpha}$ are
\[\eta  =  -h'(u)\mathrm{d}u, \quad 
T_0=0, \quad T_1 = -\frac{h'(u)}{\sqrt{ h(u)^2-(h'(u))^2 } }, \quad T_2=0, \quad T_3=T_1.
 \]
 Clearly, $\mathrm{d}\eta=0$. Also, note that $T_{\alpha} = B_{3\alpha}$. 
 We recall that $\Omega = (\omega_{\alpha\beta} ) = B^{-1}\mathrm{d}B+\X$, where
 $\X=(\X_{\alpha\beta})$, 
$\X_{\alpha\beta}= \ep \frac{a'\circ \pi}{a\circ \pi}\Big(
B_{n+1\beta}\omega_{\alpha}-\ep_{\alpha}\ep_{\beta}B_{n+1\alpha}\omega_{\beta} \Big)$,
\begin{gather*} \X_{\alpha\alpha}= \X_{\alpha 0}=\X_{0\alpha} = 0, 
\quad
\X_{13}=\X_{31}=\mathrm{d}u,
\\
\X_{12} = -\X_{21} = \frac{-\sin(u)h'(u)\mathrm{d}v}{ \sqrt{ h(u)^2-(h'(u))^2 } },  \quad
\X_{23}=\X_{32}=\frac{\sin(u)h(u)\mathrm{d}v}{\sqrt{ h(u)^2-(h'(u))^2 } }.
\end{gather*}
In this way, we have
\begin{align*}
\Omega = (\omega_{\alpha\beta}), \quad
\omega_{01}=-\omega_{10}=\frac{-h(u)\mathrm{d}u}{\sqrt{ h(u)^2-(h'(u))^2 }}, \quad
\omega_{02}= -\omega_{20}= -\sin(u)\mathrm{d}v, \\
\omega_{03}=\omega_{30} =  \frac{-h'(u)\mathrm{d}u}{\sqrt{ h(u)^2-(h'(u))^2 }}, \quad
\omega_{12}=-\omega_{21} = -\frac{\cos(u)h(u)+\sin(u)h'(u)}{\sqrt{ h(u)^2-(h'(u))^2 }}\mathrm{d}v,
\\
\omega_{13}=\omega_{31} = \frac{h(u)^2-2(h'(u))^2+h(u)h''(u)}{h(u)^2-(h'(u))^2}\mathrm{d}u, \quad
\omega_{23}=\omega_{32} = \frac{\cos(u)h'(u)+\sin(u)h(u)}{\sqrt{ h(u)^2-(h'(u))^2 }}\mathrm{d}v. 
\end{align*}
Now, we compute the shape operator. Since $\omega_{i3}(X) = -\ep_i\produ{AX,e_i}=-\omega_i(AX)$,
then
\begin{align*}
AX &=\sum_i \omega_i(AX)e_i =-\sum_i\omega_{i3}(X)e_i \\
&=\frac{h(u)^2-2(h'(u))^2+h(u)h''(u)}{h(u)^2-(h'(u))^2}\mathrm{d}u(X)e_1 
+\frac{\cos(u)h'(u)+\sin(u)h''(u)}{\sqrt{ h(u)^2-(h'(u))^2 }} \mathrm{d}v(X)e_2,  \\
Ae_1 &= \frac{h(u)^2-2(h'(u))^2+h(u)h''(u)}{h(u)(h(u)^2-(h'(u))^2)}e_1, \quad 
Ae_2 = \frac{\cos(u)h'(u)+\sin(u)h''(u)}{ h(u)(h(u)^2-(h'(u))^2) } e_2.
\end{align*}
\end{example}

\begin{example}{A helicoidal surface in $\R\times_a\mathbb{H}^2$}
We consider now $\mathbb{H}^2(-1)$ as the surface $\mathbb{H}^2(-1) = \{ (x,y,z)\in\mathbb{L}^3 : x^2+y^2-z^2=-1\}$, where $\mathbb{L}^3$ is the Lorentz-Minkowski space endowed with the standard metric
$\produ{,}=dx^2+dy^2-dz^2$. Given a real constant $c\in\R$ and a smooth function $h:\R\rightarrow\R$ with $h'>0$, we construct 
\[
\chi :M=\R^{+}\times\R\longrightarrow \R\times_a\mathbb{H}^2, \quad 
\chi (u,v) = (u\cos(c v), u\sin(c v), \sqrt{1+u^ 2},h(v)).
\]
Note that $\pi(u,v)=h(v)$.  
We consider the following frame along $\chi $, with some natural identifications: 
\begin{align*}
e_0 = & \frac{1}{a(h(v))} \big(u\cos(c\, v), u \sin(c\, v) ,\sqrt{1+u^2}, 0\big),  \\
e_1 \equiv \chi_* e_1=& \frac{1}{ a(h(v))} \big(\cos(c\, v)\sqrt{1+u^2}, \sin(c\, v)\sqrt{1+u^2},u, 0\big), \\
e_2 \equiv \chi_* e_2= &  \frac{1}{\sqrt{c^2 u^2 a(h(v))^2+h'(v)^2} } 
\big(-c\, u\, \sin(c\, v), c\, u\, \cos(c\, v), 0, h'(v)\big), \\
e_3 = & \frac{1}{a(h(v)) \sqrt{c^2 u^2 a(h(v))^2+h'(v)^2} } 
\big(\sin(c\, v) h'(v) ,-\cos(c\, v) h'(v), 0, c\, u\, a(h(v)) \big), 
\end{align*}
where $e_1,e_2$ are the normalizations of $\partial_u$ and $\partial_v$, respectively, and $e_3$ is a unit vector field of $M$ along $\chi $ on $\R\times_a\mathbb{H}^2$. The dual 1-forms on $M$ are  
\[ \omega_0 = 0, \quad \omega_1 = \frac{a(h(v))}{\sqrt{1+u^2}}\mathrm{d}u, \quad 
\omega_2 = \sqrt{c^2 u^2 a(h(v))^2+h'(v)^2}\, \mathrm{d}v, \quad \omega_3 = 0.
\]
We need to introduce the basis
\[ \E_0=\frac{1}{a}(0,0,1,0), \quad \E_1=\frac{1}{a}(0,1,0,0),\quad \E_2=\frac{1}{a}(1,0,0,0),\quad
\E_3=\partial_t.
\]
Now, we can compute the map $B:M\rightarrow\S$, $B = (\produ{\E_{\alpha},e_{\beta}})$, with $\det B=1$, 
\[ B=\left(
\begin{matrix}
\vspace{2mm}
-\sqrt{1+u^2} & - u & 0 & 0 \\
\vspace{2mm} u\,\sin(c\,v) & \sqrt{1+u^2}\sin(c\,v) & \frac{c u a(h(v))\cos(cv)}{\sqrt{c^2 u^2 a(h(v))^2+h'(v)^2}} & 
-\frac{\cos(c v)h'(v)}{\sqrt{c^2 u^2 a(h(v))^2+h'(v)^2}} \\
\vspace{2mm} u\cos(cv) & \sqrt{1+u^2}\cos(c\,v) & -  \frac{c u a(h(v))\sin(cv) }{ \sqrt{c^2 u^2 a(h(v))^2+h'(v)^2} } &
\frac{ \sin(cv)h'(v) }{ \sqrt{c^2 u^2 a(h(v))^2+h'(v)^2} } \\
\vspace{2mm}0 & 0 & \frac{ h'(v) }{ \sqrt{c^2 u^2 a(h(v))^2+h'(v)^2 }} & \frac{c u a(h(v)) }{\sqrt{c^2 u^2 a(h(v))^2+h'(v)^2}} \\
\end{matrix}
\right).
\]
Note that $B$ does not lie in the orthogonal group $O(4)$. From this, a straightforward computation gives the $\mathfrak{s}$-valued 1-form 
$\Upsilon =(\Upsilon_{\alpha\beta})= B^{-1}\mathrm{d}B$, where
\begin{gather*}
\Upsilon_{\alpha\alpha}=0, \quad \Upsilon_{01}=\Upsilon_{10}=\frac{\mathrm{d}u} {\sqrt{1+u^2}}, \quad
\Upsilon_{02}= \Upsilon_{20} = \frac{c^2u^2a(h(v))\mathrm{d}v}{ \sqrt{c^2 u^2 a(h(v))^2+h'(v)^2 } }, \\
\Upsilon_{03}= \Upsilon_{30} = -\frac{c u h'(v) \mathrm{d}v}{\sqrt{c^2 u^2 a(h(v))^2+h'(v)^2 }}, \quad
\Upsilon_{12}=-\Upsilon_{21}=- \frac{c^2u\sqrt{1+u^2}a(h(v)) \mathrm{d}v}{ \sqrt{c^2 u^2 a(h(v))^2+h'(v)^2 } }, \\
\Upsilon_{13}= -\Upsilon_{31} = \frac{c \sqrt{1+u^2} h'(v)\mathrm{d} v}{ \sqrt{c^2 u^2 a(h(v))^2+h'(v)^2 }}, \\
\Upsilon_{23}= -\Upsilon_{32} = -c \frac{a(h(v))h'(v)\mathrm{d}u
+u\big( a'(h(v))h'(v)^2-a(h(v))h''(v) \big)\mathrm{d}v
}{ c^2 u^2 a(h(v))^2+h'(v)^2  }
\end{gather*}
A straightforward computation shows $\mathrm{d}\Upsilon+\Upsilon\wedge\Upsilon= 0$. Next, the tangent vector $T$ is 
\[T \equiv \chi_*T= \produ{\partial_t,e_1}e_1+\produ{\partial_t,e_2}e_2 = \frac{h'(v)}{\sqrt{c^2 u^2 a(h(v))^2+h'(v)^2 } } e_2.
\]
Its dual 1-form and the associated functions $T_{\alpha}$ are
\[
\eta = h'(v) \mathrm{d}v, \ T_0= T_1 =0, \ T_2=\frac{h'(v)}{ \sqrt{c^2 u^2 a(h(v))^2+h'(v)^2 } }, \  
 T_3=\frac{c u a(h(v))}{\sqrt{c^2 u^2 a(h(v))^2+h'(v)^2 }}.
 \]
Clearly, it holds $\mathrm{d}\eta = 0$. Also, note that $T_{\alpha} = B_{3\alpha}$. 
 We recall that $\Omega = (\omega_{\alpha\beta} ) = B^{-1}\mathrm{d}B+\X$, where
 $\X=(\X_{\alpha\beta})$, 
$\X_{\alpha\beta}= \ep \frac{a'\circ \pi}{a\circ \pi}\Big(
B_{n+1\beta}\omega_{\alpha}-\ep_{\alpha}\ep_{\beta}B_{n+1\alpha}\omega_{\beta} \Big)$,
\begin{align*}
\X_{\alpha\alpha}=\X_{0\alpha}=\X_{\alpha0}=0, \quad 
\X_{12}=-\X_{21}=\frac{a'(h(v))h'(v)\mathrm{d}u}{\sqrt{1+u^2} \sqrt{c^2 u^2 a(h(v))^2+h'(v)^2 } }, \\
\X_{13}=-\X_{31}=\frac{ c u a(h(v))a'(h(v))\mathrm{d}u  }{\sqrt{1+u^2} \sqrt{c^2 u^2 a(h(v))^2+h'(v)^2 }}, \quad 
\X_{23}=-\X_{32}= cu a'(h(v)) \mathrm{d}v,
\end{align*}
In this way, 
\begin{align*}
&\Omega = (\omega_{\alpha\beta}), \quad 
\omega_{\alpha\alpha}=0, \quad 
\omega_{01}=\omega_{10}=\frac{\mathrm{d}u}{\sqrt{1+u^2 }}, \quad
\omega_{02}= \omega_{20}= \frac{c^2u^3 a(h(v)) \mathrm{d}v }{ \sqrt{c^2 u^2 a(h(v))^2+h'(v)^2 } }, 
\\
&\omega_{03}=\omega_{30} =  \frac{-c u h'(v)\mathrm{d}v}{  \sqrt{c^2 u^2 a(h(v))^2+h'(v)^2 } }, 
\\
&\omega_{12}=-\omega_{21} = \frac{
a'(h(v))h'(v)\mathrm{d}u -c^2(u+u^3)a(h(v))\mathrm{d}v 
}{\sqrt{1+u^2}  \sqrt{c^2 u^2 a(h(v))^2+h'(v)^2 }}, 
\\
&\omega_{13}=-\omega_{31}=c\frac{ u a(h(v))a'(h(v))\mathrm{d}u+(1+u^2)h'(v)\mathrm{d}v }
{ \sqrt{1+u^2}  \sqrt{c^2 u^2 a(h(v))^2+h'(v)^2 }  }, \\
&\omega_{23}=-\omega_{32} = 
\frac{c a(h(v))h'(v)\mathrm{d}u+
c u[ a'(h(v))(c^2u^2a(h(v))^2+2h'(v)^2)-a(h(v))h''(v) ]\mathrm{d}v }
{c^2 u^2 a(h(v))^2+h'(v)^2}.
\end{align*}
Now, we compute the shape operator. Since $\omega_{i3}(X) = -\ep_i\produ{AX,e_i}=-\omega_i(AX)$,
then
\begin{align*}
AX &=\sum_i \omega_i(AX)e_i =-\sum_i\omega_{i3}(X)e_i \\
&=-c\frac{ u a(h(v))a'(h(v))\mathrm{d}u(X)+(1+u^2)h'(v)\mathrm{d}v(X) }
{ \sqrt{1+u^2}  \sqrt{c^2 u^2 a(h(v))^2+h'(v)^2 }  }\,e_1 \\
&- \frac{c a(h(v))h'(v)\mathrm{d}u(X)+
c u[ a'(h(v))(c^2u^2a(h(v))^2+2h'(v)^2)-a(h(v))h''(v) ]\mathrm{d}v(X) }
{c^2 u^2 a(h(v))^2+h'(v)^2}\,e_2,
\\
Ae_1 &= \frac{-a'(h(v))h'(v) }{ a(h(v)) \sqrt{c^2 u^2 a(h(v))^2+h'(v)^2 } } e_2, \,\quad
Ae_2 =  \frac{-c^2 u a(h(v)) \Big(ue_1-\sqrt{1+u^2}e_2\Big) }{ \sqrt{c^2 u^2 a(h(v))^2+h'(v)^2 } }.
\end{align*}

\end{example}

\section{Conclusions}

It is well-known that a non-degenerate hypersurface of a semi-Riemannian manifold must satisfy  Gau\ss\, and Codazzi equations.  Our main concern is the converse problem. Indeed, we show that  a semi-Riemannian manifold endowed with a tensor which plays the rule of a second fundamental form, satisfying the Gau\ss\, and Codazzi equations, and extra condition is needed to obtain a local isometric immersion as a non-degenerate hypersurface of a warped product of an interval and a semi-Riemannian space of constant curvature. Indeed, among all conditions of Definition \ref{structureconditions}, equation (\ref{condC}) cannot be deduced from Codazzi (\ref{eq-codazziM}) and Gau\ss\, (\ref{eq-GaussM}) equations. This means that, in general, one cannot consider a Riemannian manifold endowed with a second fundamental form, and think of it as a spacelike hypersurface of \textit{some} spacetime. However, if one fixes the spacetime first and then consider a hypersurface, everything works as expected. 

\section*{Acknowledgments}

The second author is partially supported by the Spanish MEC-FEDER Grant
MTM2007-60731 and by the Junta de Andaluc\'ia Grant P09-FQM-4496 (with FEDER funds.)


\begin{thebibliography}{999}

 \bibitem {CX }Q. Chen,  C. R. Xiang,  Isometric immersions into warped product spaces, Acta Mathematica Sinica, English Series (2010), Volume 26, Issue 12, 2269--2282


\bibitem{D} B.~Daniel, 
Isometric immersions into $\mathbb{S}^n\times\mathbb{R}$ and $\mathbb{H}^n\times\R$ and applications to minimal surfaces. 
Trans. Amer. Math. Soc. 361 (2009), no. 12, 6255--6282

\bibitem{IL} T.~A.~Ivey, J.~M.~Landsberg,
Cartan for beginners: differential geometry via moving frames and exterior differential systems.
Graduate Studies in Mathematics, 61. American Mathematical Society, Providence, RI, 2003

\bibitem{KS} M.~Kimura, K.~Suizu,
Fundamental theorems of Lagrangian surfaces in $S^2\times S^2$. 
Osaka J. Math.  44  (2007),  no. 4, 829–850

\bibitem{MS} O. M\"uller, M. S\'anchez, 
Lorentzian manifolds isometrically embeddable in $\mathbb{L}^{N}$. 
Trans. Amer. Math. Soc.  363  (2011),  no. 10, 5367–5379

\bibitem{ON} B.~O'Neill,
Semi-Riemannian geometry. With applications to relativity.
Pure and Applied Mathematics, 103. Academic Press, Inc., New York, 1983

\bibitem{JR}  J.~Roth, 
Isometric immersions into Lorentzian products. 
Int. J. Geom. Methods Mod. Phys. 8 (2011), no. 6, 1269--1290
\end{thebibliography}
\end{document}